\theoremstyle{plain}
  \newtheorem{theorem}{Theorem}[section]
  \newtheorem{lemma}{Lemma}[section]
  \newtheorem{corollary}{Corollary}[section]
\theoremstyle{remark} 
  \newtheorem{remark}{Remark}[section]
\renewcommand{\det}{\mbox{det}}
  \newcommand{\dist}{\mbox{dist}}
  \numberwithin{equation}{section}
  \numberwithin{figure}{section}
\begin{document}

\title[An obstacle problem for Monge-Amp\`ere type functionals]
{An obstacle problem for a class of Monge-Amp\`ere type functionals}

\date\today

\author[J. Liu\and B. Zhou]
{Jiakun Liu\and Bin Zhou}

\address{Department of Mathematics\\
	Princeton University\\
	Fine Hall, Washington Road\\
	Princeton, NJ 08544-1000, USA.}
\email{jiakunl@math.princeton.edu}

\address{School of Mathematical Sciences
 and Beijing International Center for Mathematical Research\\
	Peking University\\
	Beijing 100871, China.}
\email{bzhou@pku.edu.cn}

\thanks{The first author is supported by the Simons Foundation. The second author is supported by National Science Foundation of China No. 11101004 and China Postdoctoral Science Foundation.}
\thanks{\copyright 2012 by the authors. All rights reserved}

\subjclass[2000]{35J60, 35B45; 49Q20, 28C99}

\keywords{Obstacle problem, regularity, strict convexity, fourth order equations, affine area funcional, Abreu's equation}

\begin{abstract}
In this paper we study an obstacle problem for Monge-Amp\`ere type functionals, 
whose Euler-Lagrange equations are a class of fourth order equations, including the affine maximal surface equations and Abreu's equation.
\end{abstract}

\maketitle

\baselineskip=16.4pt
\parskip=3pt

\section{Introduction}\label{s1}

Free boundary and obstacle problems for partial differential equations 
have been studied extensively in the past decades.
For Monge-Amp\`ere equations, obstacle problems 
were studied in \cite{CW,Lee,Sa} among others, and a related free boundary problem 
was studied in \cite{CM}. In this paper we consider an obstacle problem for 
the functional
   \begin{equation}\label{e101}
   J_\alpha(u)=\begin{cases}
                       \int_\Omega \left[\det\,D^2u\right]^{\alpha}
                       -\alpha\int_\Omega fu, &\ \alpha>0 \ 
\text{and}\ \alpha\neq 1, \\[5pt]
		       \int_\Omega \log\det\,D^2u -\int_\Omega fu, &\ \alpha=0,
                      \end{cases}
  \end{equation}
where $\Omega$ is a bounded domain in $\mathbb{R}^n$ and $f\in L^\infty(\Omega)$. For simplicity, we denote the nonlinear part 
of the functional \eqref{e101} by $A_\alpha(u)$, see \eqref{e204}.
We would like to study the maximization problem
  \begin{equation}\label{e102}
   J_\alpha(u)=\sup\left\{J_\alpha(v)\,:\,v\in\mathcal{S}[\varphi,\psi]\right\},
  \end{equation}
where $\mathcal{S}[\varphi,\psi]$ is the class of functions
  \begin{equation}\label{e103}
   \mathcal{S}[\varphi,\psi]=\left\{u\in C(\overline\Omega)\,:\,u \mbox{ convex }, u|_{\partial\Omega}=\varphi, Du(\Omega)\subset D\varphi(\overline\Omega), u\geq\psi \mbox{ in }\Omega\right\},
  \end{equation}
$\varphi$ is a smooth, uniformly convex function defined on a neighborhood of $\overline\Omega$, $\psi$ is an obstacle function, and $Du(\Omega)$ represents the image of the subgradients of $u$ at all points $x\in\Omega$. 

The Euler-Lagrange equations of \eqref{e101} are a class of fourth order equations, that is,
  \begin{equation}\label{e104}
   U^{ij}w_{ij}=f,
  \end{equation}
where $(U^{ij})$ is the cofactor matrix of the Hessian $D^2u$, and
  \begin{equation}\label{e105}
   w=\left[\det\,D^2u\right]^{-(1-\alpha)},\ \ \alpha\geq 0.
  \end{equation}
When $\alpha=\frac{1}{n+2}$, equation \eqref{e104} is the affine mean curvature equation and the functional \eqref{e204} is the affine area functional. When $\alpha=0$, equation \eqref{e104} is Abreu's equation arising from the study 
of Calabi's extremal metrics on toric K\"ahler manifolds \cite{D1, D2, D3, D4}. 

Due to their importance in geometry, variational problems of \eqref{e101} have attracted much
interest in recent years. In the case of $\alpha=\frac{1}{n+2}$, the variational problem without obstacle is the graph case of affine Plateau problem \cite{TW08, TW10}, raised by
Calabi and Chern. The case of $\alpha=0$ has been treated in \cite{Z1}.
The obstacle problem of affine maximal surfaces was first introduced in \cite{STW}.
In this paper, we obtain:

\begin{theorem}\label{t101}
Suppose $n=2$, $0\leq\alpha\leq \frac{1}{n+2}$, and $f\in L^\infty(\Omega)$. 
 Let $\varphi$ be a smooth, uniformly convex function in $\Omega$. 
  If $\psi$ is a convex function in $\Omega$ satisfying $\psi<\varphi$ 
  on $\partial\Omega$, then there exists a unique maximizer of \eqref{e102} 
  which is strictly convex and $C^{1,\alpha}$ in $\Omega$.
Furthermore, if $\psi$ is uniformly convex $\Omega$, 
then the maximizer of \eqref{e102} is $C^{1,1}$ in $\Omega$. 
\end{theorem}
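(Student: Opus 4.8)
The plan is to proceed in three stages: (i) existence and uniqueness of a maximizer from the concavity of $J_\alpha$ on the convex set $\mathcal{S}[\varphi,\psi]$; (ii) interior strict convexity and $C^{1,\alpha}$-regularity, obtained by showing the maximizer solves the Euler--Lagrange equation \eqref{e104}--\eqref{e105} off the contact set and ruling out line segments on its graph by a Trudinger--Wang type argument; (iii) optimal $C^{1,1}$-regularity across the free boundary when $\psi$ is uniformly convex, via a barrier construction. A convenient technical device throughout (ii)--(iii) is to first treat a penalized, constraint-free functional $J_\alpha(u)-\tfrac1\varepsilon\int_\Omega\beta_\varepsilon(\psi-u)$, prove the estimates uniformly in $\varepsilon$, and then let $\varepsilon\to0$.

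\textbf{Existence and uniqueness.} I would first note that the gradient constraint $Du(\Omega)\subset D\varphi(\overline\Omega)$ in \eqref{e103} forces a uniform Lipschitz bound on every $u\in\mathcal{S}[\varphi,\psi]$ and hence $\int_\Omega\det D^2u=|Du(\Omega)|\le|D\varphi(\overline\Omega)|$; by Jensen's inequality this bounds $A_\alpha(u)$ from above (as $t\mapsto t^\alpha$, resp.\ $t\mapsto\log t$, is concave), while the Lipschitz and $C^0$ bounds control $\int_\Omega fu$, so $\sup_{\mathcal{S}}J_\alpha<\infty$. A maximizing sequence $u_k$ subconverges uniformly on $\overline\Omega$ to some $u$, and all constraints defining $\mathcal{S}[\varphi,\psi]$ pass to the limit (using $u_k|_{\partial\Omega}=\varphi$ and stability of subgradients under uniform convergence of convex functions), so $u\in\mathcal{S}[\varphi,\psi]$. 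Upper semicontinuity, $\limsup_kA_\alpha(u_k)\le A_\alpha(u)$, follows from the weak convergence of the Monge--Amp\`ere measures $\det D^2u_k\rightharpoonup\det D^2u$ together with the concavity of $t\mapsto t^\alpha$ (resp.\ $\log t$) applied to the absolutely continuous parts; hence $u$ attains the supremum. For uniqueness, since $(\det D^2u)^{1/n}$ is concave in $u$ by Minkowski's determinant inequality and $s\mapsto s^{n\alpha}$ is concave and increasing for $n\alpha\le1$ — which holds here because $\alpha\le\frac1{n+2}<\frac1n$ — the functional $A_\alpha$ is concave (for $\alpha=0$ write $\log\det D^2u=n\log(\det D^2u)^{1/n}$) and the linear term is affine; if $u_1,u_2$ were both maximizers then $\tfrac12(u_1+u_2)\in\mathcal{S}[\varphi,\psi]$ would force equality in Minkowski's inequality, giving $D^2u_1=D^2u_2$ a.e., and the common boundary values then yield $u_1=u_2$.

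\textbf{Strict convexity and $C^{1,\alpha}$.} Interior variations, admissible because perturbations supported in the open set $\Omega^+:=\{u>\psi\}$ keep $u\ge\psi$ and, for small perturbations, satisfy the gradient constraint, show that $u$ solves \eqref{e104}--\eqref{e105} weakly in $\Omega^+$; in particular $w=(\det D^2u)^{-(1-\alpha)}$ solves the linearized Monge--Amp\`ere equation $U^{ij}w_{ij}=f$ there. Since $u=\varphi$ near $\partial\Omega$ with $\varphi$ smooth and uniformly convex, $w$ is bounded between positive constants near $\partial\Omega$, and a maximum-principle argument for $U^{ij}\partial_{ij}$ propagates a two-sided bound $0<\lambda\le\det D^2u\le\Lambda$ into $\Omega^+$. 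The heart of the proof — and the step I expect to be the main obstacle — is to show $u$ has no line segment on its graph; here the restriction $n=2$ is essential. Arguing by contradiction as in \cite{TW08,TW10}, if $u$ coincides with a supporting plane on a set with more than one point then in dimension two this set contains a segment $\ell$, and one shows that either $\ell$ (minus its endpoints) lies in $\Omega^+$, in which case a suitable admissible perturbation near $\ell$ makes the concave term $A_\alpha$ increase faster than the loss in the linear term, contradicting maximality; or $\ell$ has an endpoint on $\partial\Omega$, which is ruled out by the uniform convexity of $\varphi$ together with $\psi<\varphi$ on $\partial\Omega$. The intersection of $\ell$ with the contact set is handled using the convexity of $\psi$. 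Once strict convexity is established, Caffarelli's regularity theory for Monge--Amp\`ere equations with right-hand side bounded between positive constants gives $u\in C^{1,\alpha}(\Omega)$, while Schauder bootstrapping of \eqref{e104} gives smoothness in $\Omega^+$; see \cite{Z1} for the analogous argument when $\alpha=0$.

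\textbf{$C^{1,1}$ under uniform convexity of $\psi$.} On the contact set $D^2u=D^2\psi$ is bounded, so it suffices to bound $D^2u$ from above in $\Omega^+$ up to the free boundary $\Gamma=\partial\Omega^+\cap\Omega$, the lower bound being automatic by convexity. At a point $x_0\in\Gamma$ one has $u(x_0)=\psi(x_0)$ and $Du(x_0)=D\psi(x_0)$ by the $C^1$-regularity from stage (ii), and $u\ge\psi$ gives the lower quadratic bound $u(x)\ge\psi(x_0)+D\psi(x_0)\cdot(x-x_0)+\tfrac\delta2|x-x_0|^2$ near $x_0$. Constructing a supersolution of \eqref{e104} lying above $\psi$ with matching first-order data at $x_0$ and quadratic growth $\le C|x-x_0|^2$, and comparing it with $u$ through the variational characterization of the maximizer (cleanest on the penalized problem, then pass to the limit), yields the matching upper bound $u(x)\le\psi(x_0)+D\psi(x_0)\cdot(x-x_0)+C|x-x_0|^2$. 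Hence $D^2u$ is bounded near $\Gamma$, and together with the interior estimates in $\Omega^+$ and the bound on the contact set this gives $u\in C^{1,1}(\Omega)$.
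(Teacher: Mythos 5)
Your three-stage outline matches the paper's at a high level, and the existence--uniqueness argument is essentially the paper's Lemma~\ref{l201}. However, the two regularity stages contain genuine gaps that the paper's technical machinery is built precisely to overcome, and which your proposal does not address.

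\textbf{Circularity in the determinant bounds.} You claim that near $\partial\Omega$ the quantity $w=(\det D^2u)^{\alpha-1}$ is pinched between positive constants and that ``a maximum-principle argument for $U^{ij}\partial_{ij}$ propagates a two-sided bound'' into $\Omega^+$. This does not work: the operator $U^{ij}\partial_{ij}$ is not known to be uniformly elliptic --- its coefficients depend on the Hessian of $u$, whose non-degeneracy is exactly what one is trying to establish. The paper instead proceeds non-circularly: Lemmas~\ref{l202} and~\ref{l203} are \emph{interior a priori} estimates for smooth solutions (the lower bound coming from the Legendre transform), and Lemma~\ref{l302} and Lemma~\ref{l301} provide the penalized smooth approximations to which those estimates can be applied uniformly. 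Your parenthetical remark about the penalized functional is in the right spirit, but you do not supply the actual mechanism (a second boundary value problem on a large ball, Lemma~\ref{l303}) by which smooth approximants exist.

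\textbf{The rotation transform is missing, and it is essential.} Your strict-convexity step says that a ``suitable admissible perturbation near $\ell$ makes the concave term $A_\alpha$ increase faster than the loss in the linear term.'' This is not substantiated, and it is not how the paper argues. The paper's Lemma~\ref{l402} blows up the putative segment and passes to a limit surface $\tilde{\mathcal G}_0$ that contains two mutually orthogonal segments \eqref{e406}, and in particular is \emph{not} a graph over the original $x$-plane near the origin. One must therefore rotate coordinates in $\mathbb{R}^{n+1}$. Since the Euler--Lagrange equation \eqref{e104} is not invariant under such rotations when $\alpha\neq\frac{1}{n+2}$, the paper develops in \S2.4--2.5 the transformed equation \eqref{e218}, \eqref{e221} and the corresponding determinant estimate Lemma~\ref{l204}. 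This is the main technical ingredient for $0\le\alpha<\frac{1}{n+2}$ and cannot be bypassed; a contradiction is then reached via Lemma~\ref{l401} (boundedness of the Monge--Amp\`ere measure vs.\ a segment through the interior), not via a perturbation argument. Boundary segments are ruled out in Lemma~\ref{l404} by a Legendre-transform duality argument, again invoking the determinant estimates, not by the uniform convexity of $\varphi$ alone.

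\textbf{The $C^{1,1}$ step lacks a comparison mechanism.} You propose constructing a supersolution of the fourth-order equation \eqref{e104} and ``comparing it with $u$ through the variational characterization.'' Since the equation is fourth order there is no maximum principle (a point the paper itself stresses in \S3), and the variational inequality $J_\alpha(u)\ge J_\alpha(v)$ does not translate into a pointwise ordering between $u$ and a supersolution. The paper instead proves Lemma~\ref{l502}, a direct quadratic growth estimate $C_1|x|^2\le u(x)\le C_2|x|^2$ near the free boundary, using a cone-comparison of normal images together with the determinant bound --- a measure-theoretic argument, not a barrier argument. The $C^{1,1}$ bound then follows (Theorem~\ref{t501}) by rescaling $u_\delta(x)=\delta^{-2}u(\delta x)$, applying the interior a priori estimates uniformly in $\delta$, and using the approximation lemmas. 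Your proposal is missing both the quadratic growth estimate and the rescaling step.

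In summary, the global strategy (existence, strict convexity, regularity) is correct, but the proposal omits the rotation transform in $\mathbb{R}^{n+1}$ with its a priori estimates (Lemma~\ref{l204}), the smooth approximation by solutions of the second boundary value problem (Lemmas~\ref{l302}, \ref{l303}), and the measure-theoretic quadratic growth estimate (Lemma~\ref{l502}); these are the load-bearing steps of the paper's proof and the replacements you suggest (maximum principle for the linearized operator, perturbation near a segment, barrier supersolutions) would not go through.
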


We remark that in higher dimensions, the problem is more complicated since Lemma \ref{l401} does not hold. Furthermore, in the case of $\alpha=0$, the interior estimate in Lemma \ref{l203} remains open when $n>2$. We will consider the higher dimensional cases and more general forms of the Monge-Amp\`ere type functionals with $f=f(x,u,Du)$ is our forthcoming work.

This paper is organized as follows: In Section 2 we recall some preliminary results that will be used in subsequent sections. In addition, we show that how the functionals and equations change under a rotation in $\mathbb{R}^{n+1}$ and obtain the a priori determinant estimates under the rotation transform, where the functionals have more general forms \eqref{e215}. 
In Section 3 we show that the maximizer of $J_\alpha$ can be approximated by a sequence of smooth maximizers of appropriate penalized functionals. 
In Section 4 we prove that the maximizer is strictly convex by an observation in \cite{TW05,TW10}. 
The proof of Theorem \ref{t101} is contained in Section 5, where the $C^{1,\alpha}$ and $C^{1,1}$ regularities are obtained, respectively.

\section{Preliminaries}

\subsection{Monge-Amp\`ere measure}

Let $\Omega$ be a bounded domain in $\mathbb{R}^n$ and 
$u$ be a convex function in $\Omega$.
The {\it normal mapping} of $u$, $N_u$, is a set-valued mapping defined as follows.
For any point $x\in\Omega$, $N_u(x)$ is the set of slopes of supporting hyperplanes of $u$ at $x$, that is,
  \begin{equation}\label{e201}
   N_u(x)=\{p\in\mathbb{R}^n\,:\,u(y)\geq u(x)+p\cdot(y-x),\quad\forall y\in\Omega\}.
  \end{equation}
For any Borel set $E\subset\Omega$, $N_u(E)=\bigcup_{x\in E}N_u(x)$. If $u$ is $C^1$, the normal mapping $N_u$ is exactly the gradient mapping $Du$. 

From the normal mapping we define the {\it Monge-Amp\`ere measure} $\mu[u]$ by
  \begin{equation}\label{e202}
   \mu[u](E)=|N_u(E)|
  \end{equation}
for any Borel set $E\subset\Omega$, where the right hand side is the Lebesgue measure of $N_u(E)$. 
If $u$ is $C^2$ smooth, we have $\mu[u]=(\det\,D^2u)dx$. In the non-smooth case, the Monge-Amp\`ere measure $\mu[u]$ is a Radon measure, and is weakly continuous with respect to the convergence of convex functions, namely if a sequence of convex functions $\{u_i\}$ converges to a convex function $u$ in $L^\infty_{loc}$, then for any closed $E\subset\Omega$,
  \begin{equation}\label{e203}
   \lim_{i\to\infty}\sup\mu[u_i](E)\leq\mu[u](E).
  \end{equation}

\subsection{Existence and uniqueness of maximizer}

Note that the functional $J_\alpha$ in \eqref{e101} is well defined on the set of $C^2$-smooth, convex functions.  To study the maximization problem, 
we extend the functional $J_\alpha$ to the set
$\mathcal{S}[\varphi,\psi]$ in \eqref{e103}, which is closed under the locally uniform convergence of convex functions.
It is clear that the linear part
in $J_\alpha$ is naturally defined. It suffices to
extend the nonlinear part $A_\alpha$ to $\mathcal{S}[\varphi,\psi]$.
If $u$ is a convex function, $u$ is almost everywhere twice-differentiable,
i.e., the Hessian matrix $(D^2u)$ exists almost everywhere.
Denote the extended Hessian matrix by $\partial^2u(x)=D^2u(x)$ when $u$ is
twice differentiable at $x\in\Omega$ and $\partial^2u(x)=0$ otherwise.
As a Radon measure, $\mu[u]$ can be decomposed into
a regular part and a singular part as follows,
$$\mu[u]=\mu_r[u]+\mu_s[u].$$
It was proved in \cite{TW05} that the regular part $\mu_r[u]$
can be given explicitely by $\mu_r[u]=\det\, \partial^2u\, dx$
and hence $\det\, \partial^2u$ is a locally integrable function.
Therefore for any $u\in{ S[\varphi,\psi]}$, we can define
  \begin{equation}\label{e204}
   A_\alpha(u)=\left\{\begin{array}{ll}
                       \int_\Omega \left[\det\,\partial^2u\right]^{\alpha}, & \alpha>0, \\[5pt]
		       \int_\Omega \log\det\,\partial^2u, & \alpha=0.
                      \end{array}\right.
  \end{equation}

\begin{lemma}\label{l201}
 Suppose $0\leq\alpha\leq \frac{1}{n+2}$.
 $J_\alpha$ is upper semi-continuous, bounded and 
 concave in $\mathcal{S}[\varphi,\psi]$. 
 It follows that there exists a unique maximizer $u_0$ of \eqref{e102}. 
\end{lemma}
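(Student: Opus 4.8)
The plan is to verify the three structural properties—upper semi-continuity, boundedness, and concavity—of $J_\alpha$ on $\mathcal{S}[\varphi,\psi]$, and then deduce existence and uniqueness of the maximizer by the direct method of the calculus of variations together with strict concavity. The linear term $-\alpha\int_\Omega fu$ (resp. $-\int_\Omega fu$ when $\alpha=0$) is continuous under locally uniform convergence, affine in $u$, and bounded since $f\in L^\infty(\Omega)$ and all competitors lie between $\psi$ and $\varphi$ in a fixed bounded set; hence it suffices to treat the nonlinear part $A_\alpha$ defined in \eqref{e204}.

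First I would establish \emph{concavity} of $A_\alpha$. For $C^2$ convex functions this is the classical fact that $u\mapsto(\det D^2u)^{1/n}$ is concave on positive-definite symmetric matrices (a consequence of the Brunn--Minkowski inequality for the determinant), and since $0\le\alpha\le\frac1{n+2}<\frac1n$, the map $t\mapsto t^{n\alpha}$ (resp. $t\mapsto n\log t$ when $\alpha=0$) is concave and nondecreasing, so the composition $(\det D^2u)^\alpha=\big((\det D^2u)^{1/n}\big)^{n\alpha}$ is concave in $u$; integrating over $\Omega$ preserves concavity. To pass to general $u\in\mathcal{S}[\varphi,\psi]$ one uses the decomposition $\mu[u]=\det\partial^2u\,dx+\mu_s[u]$ from \cite{TW05}: along a segment $u_t=(1-t)u_0+tu_1$ the regular parts still obey the pointwise concavity inequality a.e., while the singular parts only help (they contribute nonnegatively to $\mu[u_t]$ but are discarded in $A_\alpha$, which uses only $\partial^2 u$), so $A_\alpha(u_t)\ge(1-t)A_\alpha(u_0)+tA_\alpha(u_1)$.

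Next, \emph{upper semi-continuity}: if $u_i\to u$ locally uniformly in $\mathcal{S}[\varphi,\psi]$, the weak convergence \eqref{e203} of Monge--Amp\`ere measures gives $\limsup_i\mu[u_i](E)\le\mu[u](E)$ for closed $E$; since $\mu_r[u_i]=\det\partial^2u_i\,dx\le\mu[u_i]$ and $t\mapsto t^\alpha$ ($\log t$) is concave, an application of the divergence/Jensen argument of \cite{TW05,TW08} yields $\limsup_i A_\alpha(u_i)\le A_\alpha(u)$; together with continuity of the linear term this gives upper semi-continuity of $J_\alpha$. For \emph{boundedness from above}, the constraint $Du(\Omega)\subset D\varphi(\overline\Omega)$ forces $\mu[u](\Omega)=|N_u(\Omega)|\le|D\varphi(\overline\Omega)|=:M$; since $\alpha\le\frac1{n+2}<1$, H\"older's inequality gives $\int_\Omega(\det\partial^2u)^\alpha\le|\Omega|^{1-\alpha}M^\alpha$ when $\alpha>0$, and Jensen's inequality bounds $\int_\Omega\log\det\partial^2u\le|\Omega|\log(M/|\Omega|)$ when $\alpha=0$; combined with $\|u\|_{L^\infty}\le C$ this bounds $J_\alpha$.

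Finally, for existence I take a maximizing sequence $u_i\in\mathcal{S}[\varphi,\psi]$; the uniform gradient bound $Du_i(\Omega)\subset D\varphi(\overline\Omega)$ together with the fixed boundary data $\varphi$ gives uniform Lipschitz and $L^\infty$ bounds, so by the Blaschke selection theorem a subsequence converges locally uniformly to some $u_0$, which lies in the closed set $\mathcal{S}[\varphi,\psi]$; upper semi-continuity then forces $J_\alpha(u_0)\ge\limsup_i J_\alpha(u_i)=\sup$, so $u_0$ is a maximizer. Uniqueness follows from \emph{strict} concavity: if $u_0,u_1$ were two distinct maximizers, then $u_{1/2}=\frac12(u_0+u_1)\in\mathcal{S}[\varphi,\psi]$ and concavity gives $J_\alpha(u_{1/2})\ge\frac12(J_\alpha(u_0)+J_\alpha(u_1))=\sup$, with equality only if $\det\partial^2u_0=\det\partial^2u_1$ a.e.\ and the Brunn--Minkowski inequality degenerates, which (on a domain where both have the same boundary data) forces $D^2u_0=D^2u_1$ a.e.\ and hence $u_0\equiv u_1$, a contradiction. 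The main obstacle is the upper semi-continuity step: one must control the loss of mass from the regular part to the singular part in the limit, and here the restriction $\alpha\le\frac1{n+2}$ (which makes $t\mapsto t^\alpha$ sufficiently concave) is exactly what is needed, following the lower-semicontinuity technique of Trudinger--Wang.
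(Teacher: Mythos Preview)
Your proposal is correct and follows precisely the strategy of \cite{TW05} (and \cite{Z2} for $\alpha=0$) that the paper simply cites in lieu of a detailed proof: concavity via the Minkowski determinant inequality composed with the concave map $t\mapsto t^{n\alpha}$, upper semi-continuity via weak convergence of Monge--Amp\`ere measures together with concavity of $t\mapsto t^\alpha$, boundedness via H\"older/Jensen using the constraint $N_u(\Omega)\subset D\varphi(\overline\Omega)$, and existence--uniqueness via the direct method plus strict concavity. One minor inaccuracy worth flagging: the restriction $\alpha\le\frac{1}{n+2}$ is not what is ``exactly needed'' for the upper semi-continuity step---concavity of $t\mapsto t^\alpha$ for any $\alpha\in(0,1)$ already suffices there---so your closing sentence overstates the role of that bound, though this does not affect the validity of the argument in the stated range.
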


\begin{proof}
 The proof for the cases $\alpha=\frac{1}{n+2}$ and $\alpha=0$ can be found in \cite{TW05, Z2}, respectively. One can check that the proof also holds for 
 $0<\alpha<\frac{1}{n+2}$.
\end{proof}

\subsection{Estimates for classical solutions}

We include the following a priori estimates in \cite{TW00,TW05}, which will be needed in subsequent sections, 
see also \cite{D2, Z1} for the case of $\alpha=0$.
Consider the equation
	\begin{eqnarray}
		U^{ij}w_{ij}\!\!&=&\!\!f\quad\mbox{in }\Omega,\label{e205} \\
		w\!\!&=&\!\![\det\,D^2u]^{\alpha-1},\nonumber
	\end{eqnarray}
where $(U^{ij})$ is the cofactor matrix of the Hessian matrix $D^2u$, and $\alpha\in[0,1)$ is a constant.

\begin{lemma}\label{l202}
Let $u\in C^4(\Omega)\cap C^{0,1}(\overline\Omega)$ be a convex solution of \eqref{e205} with $u=0$ on $\partial\Omega$. 
Then for any $y\in\Omega$, we have the a priori estimate
	\begin{equation}\label{e206}
		\det\,D^2u(y)\leq C,
	\end{equation}
where $C$ depends only on $n$, $\alpha$, $\dist(y,\partial\Omega)$, 
$\sup_\Omega(-u)$, $\sup_\Omega|Du|$, and $\sup_\Omega f$.
\end{lemma}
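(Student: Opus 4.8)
The plan is to derive the upper bound for $\det D^2u$ at an interior point $y$ by a barrier/maximum-principle argument applied to an auxiliary function built from $w$ and $u$. Since $w=[\det D^2u]^{\alpha-1}$ with $\alpha<1$, an upper bound on $\det D^2u$ is equivalent to a \emph{lower} bound on $w$, so the goal becomes: show $w(y)\geq c>0$ for $y$ in the interior. First I would note that the operator $L:=U^{ij}\partial_{ij}$ is linear elliptic (possibly degenerate) with the key structural identity $U^{ij}u_{ij}=n\det D^2u$ and $\partial_i U^{ij}=0$ (divergence-free rows), so $L$ annihilates affine functions and $L(u)=n\det D^2u$. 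The equation $Lw=f$ then says $w$ is an $L$-subsolution up to the bounded term $f$; but that alone controls $w$ from above, not below, so the real work is to produce a subsolution of $Lw\le f$ that lies below $w$ and is bounded below away from $\partial\Omega$.

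The key step is to test against a function of the form $\Phi=w+A(-u)+B|x-x_0|^2$ or, following \cite{TW00,TW05}, to consider $g=w\,\phi^\beta$ where $\phi$ is a defining function for a sub-domain (e.g. $\phi = -u$ itself, using $u=0$ on $\partial\Omega$, or $\phi = $ a concave cutoff adapted to $\dist(y,\partial\Omega)$) and $\beta>0$ is chosen so that at an interior minimum of $g$ the concavity of $\phi^\beta$ in the degenerate directions is compensated. Computing $L g$ at an interior minimum point $\bar x$ of $g$ over $\overline\Omega$ (the minimum is interior because $w\phi^\beta\to 0$ or is under control near $\partial\Omega$ via the Lipschitz bound on $u$ — here one uses $\sup_\Omega|Du|$ and $\sup_\Omega(-u)$ to compare $w$ with a power of the distance), one gets $0\le L g(\bar x)$, and expanding $Lg = \phi^\beta Lw + w\,L(\phi^\beta) + 2U^{ij}w_i(\phi^\beta)_j$ while using $\nabla g=0$ to eliminate the cross term gives an inequality of the form $0\le \phi^\beta f + w\,[\,\text{controlled}\,] $, i.e. $w(\bar x)\phi(\bar x)^{\beta-\text{something}} \ge -C$, which after rearranging and using $Lu = n\det D^2u = n w^{1/(\alpha-1)}$ together with the arithmetic-geometric mean inequality $U^{ij}\xi_{ij}\ge n(\det U^{ij})^{1/n}(\det D^2\xi)^{1/n}$ for convex $\xi$ (here applied to $\xi = |x|^2$ or to $u$) closes the estimate. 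The range $\alpha\in[0,1)$ enters through the sign and size of the exponent $1/(\alpha-1)<0$, which is what makes the power of $w$ appear with the favorable sign.

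The main obstacle I expect is the degeneracy of $L$: $(U^{ij})$ has no lower ellipticity bound, so one cannot simply apply the classical interior Harnack or maximum principle to $w$. The trick is precisely that $(U^{ij})$, while degenerate, still satisfies $\det(U^{ij}) = (\det D^2u)^{n-1} = w^{(n-1)/(\alpha-1)}$, so terms of the form $U^{ij}\eta_{ij}$ for a fixed smooth convex $\eta$ are bounded below by $n(\det U^{ij})^{1/n}(\det D^2\eta)^{1/n} = c\, w^{(n-1)/(n(\alpha-1))}$; balancing this negative-exponent power of $w$ against the linear term $\phi^\beta f$ and the term coming from $w L(\phi^\beta)$ forces the choice of $\beta$ and yields the bound $w \ge c(n,\alpha,\dist(y,\partial\Omega),\sup(-u),\sup|Du|,\sup f)$. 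I would organize the proof as: (i) reduce to a lower bound on $w$ on a fixed ball $B_{d/2}(y)$ with $d=\dist(y,\partial\Omega)$; (ii) set up the auxiliary function $g$ with a cutoff vanishing on $\partial B_d(y)$; (iii) boundary/Lipschitz comparison to place the minimum of $g$ in the interior; (iv) the maximum-principle computation at the minimum, invoking the divergence-free structure and the AM–GM inequality for $(U^{ij})$; (v) algebra to extract the explicit constant. The delicate accounting is step (iv), choosing $\beta$; everything else is bookkeeping, and since this is quoted verbatim from \cite{TW00,TW05} I would simply cite the computation there and record the dependence of the constant.
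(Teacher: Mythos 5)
Your overall strategy---a pointwise maximum-principle argument on an auxiliary function built from $w$ and $(-u)$, exploiting the divergence-free structure $\partial_i U^{ij}=0$, the identity $U^{ij}u_{ij}=n\det D^2u$, and an AM--GM inequality for the cofactor matrix---is the right one, and it is the route taken in \cite{TW00,TW05}, to which the paper defers for this lemma without reproducing the computation. (The paper's nearest explicit version is the proof of Lemma~\ref{l204} for the rotated equation, whose test function is $\eta=\log w-\beta\log(-\hat v)-A|D\hat v|^2$.)

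However, the auxiliary function you actually write down, $g=w\,\phi^\beta$ with $\phi=-u$ (or a cutoff vanishing on the relevant boundary) and $\beta>0$, carries the \emph{wrong sign of the exponent} on $\phi$, and the argument as formulated cannot even start. Since $w>0$ and $\phi\geq 0$ with $\phi=0$ on the boundary, we have $g\geq 0$ everywhere and $g\to 0$ at the boundary, so $\inf g=0$ is attained on the boundary: there is no interior minimum at which to apply the second-derivative test, and your parenthetical justification that ``the minimum is interior because $w\phi^\beta\to0$'' is exactly backwards. The auxiliary function must \emph{blow up to} $+\infty$ as one approaches the boundary to force the minimum inside; the correct choice, as in Lemma~\ref{l204} and in \cite{TW00,TW05}, is
$$\eta=\log w-\beta\log(-u)-A|Du|^2,$$
i.e.\ one studies $w$ \emph{divided} by $(-u)^\beta e^{A|Du|^2}$, or, equivalently (since $w=(\det D^2u)^{\alpha-1}$ with $\alpha<1$), one maximizes $(\det D^2u)^{1-\alpha}(-u)^\beta e^{A|Du|^2}$ in the interior. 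The gradient term $A|Du|^2$ is also not a decoration: it is what absorbs the bad quadratic-gradient and Laplacian terms produced by differentiating $-\beta\log(-u)$ twice (compare \eqref{e228}--\eqref{e233}), so your first alternative $w+A(-u)+B|x-x_0|^2$ likewise lacks the structure needed to close the estimate. With the exponent sign corrected and the gradient term retained, the mechanics you outline in step (iv)---critical-point identities, contraction against $\hat v^{ij}$, the reduction $\hat d>1$ WLOG, and AM--GM for the cofactors---do go through and yield the stated bound with the stated dependence of the constant.
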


\begin{remark}
In Lemma \ref{l202}, the constant $C$ is independent of $\inf_\Omega f$. Hence it is independent of $f$ if $f\leq0$.
By Lemma \ref{l302}, the maximizer $u_0$ of $J_\alpha$ can be locally approximated by smooth solutions of \eqref{e205}, and thus Lemma \ref{l202} still holds for non-smooth maximizers. When $\alpha=\frac{1}{n+2}$, the estimate \eqref{e206} was previously proved in \cite{TW05}.
\end{remark}

\begin{remark}
If $n=2$, the assumption $u=0$ on $\partial\Omega$ in Lemma \ref{l202} can be removed \cite{TW05}.
\end{remark}

To prove that $\det\,D^2u$ has a positive lower bound, we consider the Legendre transform $u^*$ of $u$, which is a convex function defined in the domain $\Omega^*=N_u(\Omega)$, given by
	\begin{equation}\label{e207}
		u^*(y)=\sup\{x\cdot y-u(x)\,:\,x\in\Omega\}.
	\end{equation}
If $u$ is strictly convex near $\partial\Omega$, $u$ can be recovered from $u^*$ by the same transform. If $u$ is $C^2$ smooth at $x$, $y=Du(x)$ and $\det\,D^2u(x)\neq0$, then the Hessian matrix $D^2u(x)$ is the inverse of the Hessian matrix $D^2u^*(y)$, and
	\begin{equation}\label{e208}
		\det\,D^2u(x)=[\det\,D^2u^*(y)]^{-1}.
	\end{equation}
In particular, if $u$ is a maximizer of the functional $J_\alpha$, $u^*$ is a maximizer of the dual functional
	\begin{equation}\label{e209}
		J^*_\alpha(u)=
		\begin{cases}
\ \int_{\Omega^*}[\det D^2u^*]^{1-\alpha}\, dy 
-\alpha\int_{\Omega^*}f(Du^*)(yDu^*-u^*)\det\,D^2u^*\,dy, \ \ \alpha>0 \ 
\text{and}\ \alpha\neq 1,&\\[5pt]
\ -\int_{\Omega^*}\det D^2u^*\log\det D^2u^*\,dy
-\int_{\Omega^*}f(Du^*)(yDu^*-u^*)\det\,D^2u^*\,dy,\ \ \alpha =0.&
\end{cases}
	\end{equation}
Therefore, if $u^*$ is smooth, it satisfies the equation
	\begin{equation}\label{e210}
		U^{*ij}w^*_{ij}=
		\begin{cases}
		-\frac{\alpha}{1-\alpha}f(Du^*)\det D^2u^*, &\ \alpha>0 \
\text{and}\ \alpha\neq 1,\\[5pt]
   -f(Du^*)\det D^2u^*, &\ \alpha=0,
		\end{cases}
	\end{equation}
where $U^{*ij}$ is the cofactor matrix of $D^2u^*$ and 
\begin{equation}\label{e211}
w^*=
\begin{cases}
\ [\det D^2u^*]^{-\alpha}, & \alpha>0 \
\text{and}\ \alpha\neq 1,\\[5pt]
\ -\log\det D^2u^*,& \alpha =0.
\end{cases}
\end{equation}

By a similar argument to that of Lemma \ref{l202}, we have the following result \cite{TW00, TW05, Z1}.
\begin{lemma}\label{l203}
Let $u^*$ be a smooth convex solution of \eqref{e210} in $\Omega^*$ in dimension 2, $u^*=0$ on $\partial\Omega^*$.
Then for any $y\in\Omega^*$, we have the a priori estimate
	\begin{equation}\label{e212}
		\det\,D^2u^*(y)\leq C,
	\end{equation}
where $C$ depends only on $\alpha, \dist(y,\partial\Omega^*), \sup_{\Omega^*}|u^*|, \sup_{\Omega^*}|Du^*|$, and $\inf f$.
\end{lemma}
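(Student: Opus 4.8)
The plan is to reduce \eqref{e212} to a lower bound for $w^*$ on the set $\{y\in\Omega^*:\dist(y,\partial\Omega^*)\ge\delta\}$, depending only on the listed quantities: by \eqref{e211} such a bound (by a positive constant when $\alpha>0$, by a possibly negative constant when $\alpha=0$) is equivalent to the desired upper bound for $\det D^2 u^*$. I would then run the argument that underlies Lemma~\ref{l202}, now applied to the dual equation \eqref{e210}. The point is that \eqref{e210} has the same structure as \eqref{e205}: when $\alpha>0$ one has $w^*=[\det D^2 u^*]^{(1-\alpha)-1}$, so $u^*$ solves an equation of the type \eqref{e205} with the exponent $\alpha$ replaced by $1-\alpha$; when $\alpha=0$, \eqref{e210} is, up to the rewriting of the linear term, Abreu's equation for $u^*$, and the two-dimensional interior estimate of \cite{Z1} (see also \cite{D2}) applies. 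The only genuinely new feature, compared with \eqref{e205}, is that the inhomogeneous term of \eqref{e210} carries an extra factor $\det D^2 u^*$.

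Concretely, I would fix $y$ with $\dist(y,\partial\Omega^*)\ge\delta$ and, using convexity together with the bounds on $|u^*|$ and $|Du^*|$, produce a section $S\subset\Omega^*$ containing $y$ on which $-u^*$ has a controlled positive lower bound and controlled oscillation; after an affine normalization of $u^*$ on $S$ the geometry of $S$ is under control. Then, as in \cite{TW00, TW05}, I would apply the maximum principle to an auxiliary function $\Phi$ built from $-u^*$, $e^{\beta|Du^*|^2}$ and $\det D^2 u^*$, with the exponents chosen in the usual way, which vanishes on $\partial S$ and hence attains its maximum at an interior point $x_0$. At $x_0$ one uses $D\log\Phi=0$ and $U^{*ij}(\log\Phi)_{ij}\le0$, expands using the identities $U^{*ij}u^*_{ij}=n\det D^2 u^*$ and, in two dimensions, $\sum_i U^{*ii}=\Delta u^*$, and substitutes \eqref{e210} for the term $U^{*ij}w^*_{ij}$; once the bad terms are absorbed, the resulting differential inequality forces $\det D^2 u^*$ to be bounded at $x_0$, hence on $S$, hence at $y$.

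The hard part will be the inhomogeneous term $-\tfrac{\alpha}{1-\alpha}f(Du^*)\det D^2 u^*$ (resp. $-f(Du^*)\det D^2 u^*$ when $\alpha=0$). Substituted into the maximum-principle inequality it produces a term proportional to $f(Du^*)$ times a power of $\det D^2 u^*$ strictly greater than one — a term involving the very quantity being estimated, and of higher order in it than the terms coming from the linearized operator acting on the cutoff factor and on $e^{\beta|Du^*|^2}$. Two observations tame it. Its sign: the coefficient $-\tfrac{\alpha}{1-\alpha}$ is negative, so this term is favorable exactly when $f\ge0$, in which case the inequality closes directly and the constant depends only on $\inf f$ (the negative part of $f$) — dually to the Remark after Lemma~\ref{l202}. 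And, crucially, the two-dimensional structure: one has the a priori bound $\int_{\Omega^*}\det D^2 u^*\le|Du^*(\Omega^*)|\le\pi(\sup_{\Omega^*}|Du^*|)^2$, which, combined with the linearized Monge-Amp\`ere (Caffarelli--Guti\'errez type) estimates available in dimension two, lets one absorb the contribution of $\|f\|_{L^\infty}\det D^2 u^*$ into the good terms over sufficiently small sections. This absorption step is exactly where $n=2$ is used, and is why — as already noted in the Introduction — Lemma~\ref{l203} is open for $n>2$ in the case $\alpha=0$. The rest (choice of exponents in $\Phi$, and the dependence of $S$ on $\dist(y,\partial\Omega^*)$, $\sup|u^*|$ and $\sup|Du^*|$) is routine and identical to the second-order estimate in the cited references.
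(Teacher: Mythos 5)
Your skeleton is the same as the paper's intended argument (the paper only sketches this lemma: the dual equation \eqref{e210} has the structure of \eqref{e205} with the weight $w^*=[\det D^2u^*]^{-\alpha}$, i.e.\ $\alpha$ replaced by $1-\alpha$, and one repeats the Pogorelov-type maximum-principle computation of Lemma \ref{l202}, citing \cite{TW00,TW05,Z1}, with \cite{Z1,D2} for $\alpha=0$). But the step you yourself isolate as ``the hard part'' is handled incorrectly, and that is a genuine gap. The extra factor $\det D^2u^*$ on the right-hand side of \eqref{e210} is \emph{not} of higher order than the good terms: contracting with the inverse matrix $u^{*ij}=U^{*ij}/\det D^2u^*$ the factor cancels, giving $u^{*ij}w^*_{ij}=-\tfrac{\alpha}{1-\alpha}f(Du^*)$, so after normalizing by $w^*$ the inhomogeneous term contributes at most $C\,\max(0,-\inf f)\,(\det D^2u^*)^{\alpha}$ at the extremal point, whereas the good term coming from the gradient factor in the auxiliary function is of size $A\,\Delta u^*\geq An\,(\det D^2u^*)^{1/n}$. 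Since $\alpha\leq\tfrac{1}{n+2}<\tfrac1n$, the $f$-term is of strictly \emph{lower} order and is absorbed pointwise, exactly as in the proof of Lemma \ref{l204}; this is also precisely why the constant depends on $\inf f$ and not on $\sup f$. No integral estimate and no linearized Monge--Amp\`ere theory enters.

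By contrast, your proposed mechanism for the case $\inf f<0$ --- combining $\int_{\Omega^*}\det D^2u^*\leq|Du^*(\Omega^*)|$ with Caffarelli--Guti\'errez estimates ``over sufficiently small sections'' --- is not a proof: the Caffarelli--Guti\'errez theory \cite{CG} presupposes two-sided pinching of $\det D^2u^*$, which is exactly what is being established, so invoking it here is circular, and an $L^1$ bound on the determinant is never connected to the pointwise inequality at the maximum point. It would moreover introduce $\|f\|_{L^\infty}$ into the constant, contradicting the stated dependence on $\inf f$ only (and your own observation (a)). Finally, your explanation of the dimensional restriction is off: the $n>2$ obstruction mentioned in the Introduction concerns only $\alpha=0$, where the weight is logarithmic and the interior estimate for Abreu's equation is known only in dimension two \cite{D2,Z1}; it is not caused by any absorption of the $f\,\det D^2u^*$ term, which for $0<\alpha\leq\tfrac{1}{n+2}$ closes by the power counting above.
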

By \eqref{e208} and \eqref{e212}, we have $\det\,D^2u\geq C$ has a positive lower bound. Note that the estimate depends on $\inf f$, but is independent of $\sup f$.

By Lemmas \ref{l202}, \ref{l203} and the Caffarelli-Guti\'errez theory \cite{CG}, we have the following H\"older and Sobolev space estimates.
\begin{theorem}\label{t201}
 Let $u\in C^4(\Omega)$ be a locally uniformly convex solution of \eqref{e205}.
\begin{itemize}
 \item[(i)] Assume $f\in L^\infty(\Omega)$. Then we have the estimate
  \begin{equation}\label{e213}
   \|u\|_{W^{4,p}(\Omega')}\leq C,
  \end{equation}
for any $p>1$ and $\Omega'\Subset\Omega$, where the constant $C$ depends on $n,p,\sup_\Omega|f|, \dist(\Omega',\partial\Omega)$, and the modulus of convexity of $u$.
 \item[(ii)] Assume $f\in C^\alpha(\Omega)$ for some $\alpha\in(0,1)$. Then
  \begin{equation}\label{e214}
   \|u\|_{C^{4,\alpha}(\Omega')}\leq C,
  \end{equation}
where $C$ depends on $n,\alpha,\|f\|_{C^\alpha(\Omega)}, \dist(\Omega',\partial\Omega)$, and the modulus of convexity of $u$.
\end{itemize}
\end{theorem}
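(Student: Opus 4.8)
The plan is to read the fourth order equation \eqref{e205} as a linearized Monge--Amp\`ere equation for $w$, use the Caffarelli--Guti\'errez theory to gain H\"older regularity for $w$ (hence for $\det D^2u$), feed this into Caffarelli's interior regularity for the Monge--Amp\`ere equation itself, and then iterate. Since $u\in C^4(\Omega)$ all the differentiations below are legitimate classical computations; the content of the theorem is that the resulting bounds depend only on the listed data. First I would fix $\Omega'\Subset\Omega''\Subset\Omega$ and record the two--sided bound $0<\lambda\leq\det D^2u\leq\Lambda$ on $\Omega''$: the upper bound is Lemma \ref{l202}, applied on sections of $u$ compactly contained in $\Omega$ after subtracting supporting planes (for $n=2$ the boundary normalization there may be dropped), and the lower bound comes from Lemma \ref{l203} together with \eqref{e208}, applied to the Legendre transform $u^{*}$ on the corresponding dual section. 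With the determinant pinched, $\mu[u]$ is doubling on $\Omega''$ (indeed comparable to Lebesgue measure), and since $u$ has controlled modulus of convexity its sections have bounded eccentricity after affine normalization, so the structural hypotheses of the Caffarelli--Guti\'errez theory \cite{CG} hold quantitatively.

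Next I would write \eqref{e205} as $\mathcal{L}_uw=f$, where $\mathcal{L}_u:=U^{ij}D_{ij}$ is the linearized Monge--Amp\`ere operator and $w=[\det D^2u]^{\alpha-1}$ is positive and pinched between positive constants. Since $f\in L^\infty\subset L^q(\Omega'')$ for every $q$, the Caffarelli--Guti\'errez interior estimate gives $w\in C^{0,\beta}_{loc}$ for some $\beta=\beta(n,\lambda,\Lambda)\in(0,1)$, with a quantitative norm bound. Because $\det D^2u=H(w)$ with $H$ smooth and positive on the compact interval covering the range of $w$ --- explicitly $H(t)=t^{1/(\alpha-1)}$ for $\alpha\in(0,1)$ and $H(t)=t^{-1}$ for $\alpha=0$ --- this yields $\det D^2u\in C^{0,\beta}_{loc}$, still between positive constants, and Caffarelli's interior $C^{2,\beta}$ estimate for the Monge--Amp\`ere equation then gives $u\in C^{2,\beta}_{loc}(\Omega)$. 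At that point $D^2u$ is continuous and positive definite, so $(U^{ij})$ is continuous and $\mathcal{L}_u$ is uniformly elliptic in the classical sense on $\Omega''$, which is all the bootstrap needs.

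The rest is a routine elliptic bootstrap on the pair $(u,w)$. For part (ii), with $f\in C^{0,\alpha}$: from $u\in C^{2,\beta}_{loc}$ one gets $U^{ij}\in C^{0,\beta}_{loc}$, so interior Schauder estimates for $\mathcal{L}_uw=f$ give $w\in C^{2,\gamma}_{loc}$ with $\gamma=\min(\alpha,\beta)$; then $\det D^2u=H(w)\in C^{2,\gamma}_{loc}$, and differentiating the Monge--Amp\`ere equation and invoking Schauder theory with coefficients now in $C^{1,\gamma}$ gives $u\in C^{4,\gamma}_{loc}$; one further turn of the iteration, with $U^{ij}\in C^{2,\gamma}_{loc}$, raises the H\"older exponent of $D^2w$ and hence of $D^4u$ up to $\alpha$, which is \eqref{e214}. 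For part (i), with $f\in L^\infty$: from $u\in C^{2,\beta}_{loc}$ the operator $\mathcal{L}_u$ has continuous coefficients, so the Calder\'on--Zygmund $W^{2,p}$ theory for $\mathcal{L}_uw=f$ gives $w\in W^{2,p}_{loc}$ for every $p>1$; hence $\det D^2u=H(w)\in W^{2,p}_{loc}\hookrightarrow C^{1,\sigma}_{loc}$ with $\sigma=1-n/p$ for $p>n$; differentiating the Monge--Amp\`ere equation twice and applying the $W^{2,p}$ theory with the continuous coefficients $U^{ij}$ --- the lower order terms being products of third derivatives of $u$ with $C^{0,\sigma}$ factors, hence in $L^\infty\subset L^p$ --- yields $u\in W^{4,p}_{loc}$, which is \eqref{e213}. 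Throughout, the constants depend only on the listed data because $\lambda,\Lambda$, the section geometry, and $\beta$ do.

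The step I expect to be the main obstacle is the verification, quantitatively in terms of the modulus of convexity of $u$ and the bounds of Lemmas \ref{l202} and \ref{l203}, of the geometric hypotheses underlying the Caffarelli--Guti\'errez machinery --- the engulfing property of the sections of $u$ and the doubling of $\mu[u]$ --- together with organizing the argument to be affine invariant, so that one may first normalize a section and only then invoke the interior estimates. Once that geometric package is in place the remaining steps are standard; in particular the case $\alpha=0$ needs no separate treatment beyond the observation that $H(t)=t^{-1}$ is smooth and bounded away from $0$ and $\infty$ on the relevant interval, which is immediate from the determinant bounds.
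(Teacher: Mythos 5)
Your proposal is correct and follows essentially the same route as the paper, which gives no detailed proof but simply invokes Lemmas \ref{l202} and \ref{l203} for the two-sided determinant pinching and the Caffarelli--Guti\'errez theory \cite{CG} (together with the standard Caffarelli $C^{2,\beta}$ and Schauder/Calder\'on--Zygmund bootstrap, as in \cite{TW00,TW05}) to conclude. Your write-up is in effect an expansion of exactly that citation, so no further comparison is needed.
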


Therefore, to prove the regularity of the maximizer $u_0$ in Lemma \ref{l201}, it suffices to prove, in view of Lemmas \ref{l202}, \ref{l203} and Theorem \ref{t201}, that (a) the maximizer $u_0$ can be approximated by smooth solutions to equation \eqref{e205} and (b) it is strictly convex. We will prove (a) and (b) in Sections 3 and 4, respectively.

\subsection{Rotations in $\mathbb{R}^{n+1}$}

In order to establish the estimate of the modulus of convexity, 
we need to treat convex functions as graphs in $\mathbb{R}^{n+1}$, and rotate the graphs in $\mathbb{R}^{n+1}$. 
When $\alpha=1/(n+2)$, the affine maximal surface equation \eqref{e104} is invariant under uni-modular transformations in $\mathbb{R}^{n+1}$. But this is not true for other $\alpha$. It has been proved in \cite{Z1} that for $\alpha=0$, under the rotations in $\mathbb{R}^{n+1}$, equation \eqref{e104} changes in a proper way such that the determinant estimate in Lemma \ref{l202} still holds.

For our purpose, we consider a more general functional 
	\begin{equation}\label{e215}
		J_\alpha(u)=A_\alpha(u)-\int_\Omega F(x,u)dx,
	\end{equation}
where $A_\alpha$ is in \eqref{e204}, $F(x, t)$ is a function on $\Omega\times \mathbb R$. 
Let $u$ be a locally critical point of the functional $J_\alpha$, thus it satisfies \eqref{e104} with the inhomogeneous term $f=F_t:=\frac{\partial F}{\partial t}$.

Consider the rotation $Z=TX$, given by $z_1=-x_{n+1}, z_{n+1}=x_1$, $z_i=x_i$ for $2\leq i\leq n$. Assume the graph of $u$, $\mathcal{G}_u=\{(x,u(x))\,:\,x\in\Omega\}$, can be represented by a convex function $z_{n+1}=v(z_1,\cdots,z_n)$ in $z$-coordinates over a domain $\hat\Omega$. Following the computation in \cite{Z2}, $v$ is a locally critical 
point of 
\begin{equation}\label{e216}
\hat J_\alpha(v)=\hat A_\alpha(v)-\int_{\hat\Omega}F(v,z_2,\cdots,z_n,-z_1),
\end{equation}
where
$$\hat A_\alpha(v)=
\begin{cases}
 \int_{\hat \Omega}[\det D^2v]^\alpha|v_1|^{1-(n+2)\alpha}\,dz, & \alpha>0,\\[5pt]
\int_{\hat \Omega} [\log\det D^2v 
-\frac{n+2}{2}\log(v_1^2)] (v_1^2)^{\frac{1}{2}}\,dz, &\ \alpha=0.
\end{cases}$$ 
When $\alpha>0$, by computing the Euler equation,
we can obtain the corresponding equation for $v$, that is,
	\begin{equation}\label{e217}
	\begin{split}
		\alpha v_1^{1-\alpha(n+2)}V^{ij}(d^{\alpha-1})_{ij}&+(1-\alpha)\alpha(n+2)(1-\alpha(n+2))v_1^{-\alpha(n+2)-1}v_{11}d^\alpha \\
			&+(1-\alpha(n+2))(2\alpha-2)v_1^{-\alpha(n+2)}(d^\alpha)_1= F_t,
	\end{split}
	\end{equation}
or equivalently, denoting $\lambda = 1-\alpha(n+2)$,
	\begin{equation}\label{e218}
		V^{ij}(d^{\alpha-1})_{ij}=g+\mathcal{F}_t,
	\end{equation}
where $(V^{ij})$ is the cofactor matrix of $(v_{ij})$, $d=\det\,D^2v$ and
	\begin{eqnarray*}
		g\!\!&=&\!\!2\lambda(1-\alpha)d^\alpha v^{ij}v_{ij1}\frac{1}{v_1}-(1-\alpha)(n+2)\lambda d^\alpha\frac{v_{11}}{v_1^2}, \\
		\mathcal{F}_t\!\!&=&\!\!\alpha^{-1} \frac{F_t}{v_1^\lambda}, \quad F_t = \frac{\partial F}{\partial t}(v,z_2,\cdots,z_n,-z_1).
	\end{eqnarray*}
When $\alpha=0$, by a similar computation we obtain \eqref{e218} with 
$\mathcal{F}_t={F_t}/{v_1}$.

\subsection{A priori estimates}

In this subsection, we obtain the a priori determinant estimates 
under the rotation transform $Z=TX$. 
Let $v$ be a smooth solution of \eqref{e218} satisfying 
	\begin{equation}\label{e219}
		v\geq0,\ \ v\geq z_1,\ \ v_1\geq0,
	\end{equation}
and $v(0)$ is as small as we want such that for the positive constant $s$ and $h$ in $(0,1/2)$, $\hat\Omega_{s,h}$ is a nonempty open set, where
	\begin{equation}\label{e220}
		\hat\Omega_{s,h}=\{z\,:\,v(z)<sz_1+h\}.
	\end{equation}
Set $\hat v:=v-sz_1-h$, then $\hat\Omega_{s,h}=\{z\,:\,\hat v(z)<0\}$ and $\hat v$ satisfies
	\begin{equation}\label{e221}
		\hat V^{ij}(\hat d^{\alpha-1})_{ij}=\hat g+\mathcal{\hat F}_t,
	\end{equation}
where $(\hat V^{ij})$ is the cofactor matrix of $(\hat v_{ij})$, $\hat d=\det\,D^2\hat v$ and
	\begin{eqnarray*}
		\hat g\!\!&=&\!\!2\lambda(1-\alpha)\hat d^\alpha \frac{\hat v^{ij}\hat v_{ij1}}{\hat v_1+s}-(1-\alpha)(n+2)\lambda \hat d^\alpha\frac{\hat v_{11}}{(\hat v_1+s)^2}, \\
		\mathcal{\hat F}_t\!\!&=&\!\!\alpha^{-1} \frac{\hat F_t}{(\hat v_1+s)^\lambda}, \quad \hat F_t = \frac{\partial F}{\partial t}(\hat v+sz_1+h,z_2,\cdots,z_n,-z_1).
	\end{eqnarray*}

\begin{lemma}\label{l204}
Assume $0\leq\alpha\leq\frac{1}{n+2}$. Let $\hat v$ be a smooth solution of \eqref{e221} in $\hat\Omega_{s,h}$ and $\hat v=0$ on $\partial\hat\Omega_{s,h}$.
Then for any $z\in\hat\Omega_{s,h}$, we have the a priori estimate
	\begin{equation}\label{e222}
		\det\,D^2\hat v\leq C,
	\end{equation}
where $C$ depends only $n,\alpha,\dist(z,\partial\hat\Omega_{s,h}),\sup_{\hat\Omega_{s,h}}|\hat v|, \sup_{\hat\Omega_{s,h}}|D\hat v|$ and $\sup{\hat F}_t$.
\end{lemma}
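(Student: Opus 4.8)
The plan is to follow the strategy of Lemma \ref{l202}, adapting the Pogorelov-type argument to the modified equation \eqref{e221}. The key point is that the structural terms $\hat g$ and $\hat{\mathcal F}_t$ appearing on the right-hand side, although they involve derivatives of $\hat v$ up to third order, have a favorable sign or can be absorbed because of the normalization conditions \eqref{e219}, in particular $\hat v_1 + s \geq s > 0$, which keeps the weight $(\hat v_1+s)^{-\lambda}$ and the denominators in $\hat g$ bounded and nondegenerate near the part of the domain where the test function concentrates. Concretely, I would work with the auxiliary function
\begin{equation*}
\Phi = (-\hat v)^\beta (\hat d)^{\gamma} \exp\left(\tfrac12 A |D\hat v|^2\right)
\end{equation*}
(or the logarithm $\log(-\hat v) + \gamma \log \hat d + \tfrac12 A|D\hat v|^2 + \beta\log(\cdot)$ depending on which is cleaner in two dimensions), for suitable positive constants $\beta,\gamma,A$ to be chosen, defined on $\hat\Omega_{s,h}$. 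Since $\hat v = 0$ on $\partial\hat\Omega_{s,h}$ and $\hat d > 0$ in the interior, $\Phi$ attains an interior maximum at some point $z_0$, and the estimate \eqref{e222} will follow from bounding $\Phi(z_0)$.

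At the maximum point $z_0$ I would compute $\log\Phi$, use the first-order condition to eliminate first derivatives of $\log\hat d$, and apply the linearized operator $\hat V^{ij}\partial_{ij}$ to $\log\Phi$, which is $\le 0$ at $z_0$. The main computation is to substitute the PDE \eqref{e221} for the term $\hat V^{ij}(\hat d^{\alpha-1})_{ij}$, which after differentiating $\hat d^{\alpha-1}$ produces $\hat V^{ij}\hat d_{ij}$ together with a term quadratic in $\hat d_i$ that has a definite sign when $\alpha < 1$. The condition $0 \le \alpha \le \frac{1}{n+2}$ enters here: it makes $\lambda = 1-\alpha(n+2) \ge 0$, so that the leading piece of $\hat g$ (once $\hat v^{ij}\hat v_{ij1}$ is handled via the Monge-Amp\`ere structure, i.e. $\hat v^{ij}\hat v_{ij1} = (\log\hat d)_1$) and the $\hat v_{11}$ term can be controlled rather than fought against. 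Third derivatives of $\hat v$ are dominated using the standard inequality $\hat V^{ij}(\log\hat d)_i(\log\hat d)_j \cdot \hat d \ge c\, \hat V^{ij}\hat V^{kl}\hat v_{ikm}\hat v_{jlm}/\hat d$ combined with Cauchy-Schwarz, and the gradient term $\exp(\tfrac12 A|D\hat v|^2)$ is precisely what absorbs the first-order cross terms — this is the usual role of the exponential factor in Pogorelov estimates. The inhomogeneous term contributes $\hat{\mathcal F}_t \le C(\alpha) s^{-\lambda}\sup|\hat F_t|$, which is a harmless additive bound.

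The output of this computation is an inequality of the form $\hat d(z_0)^{\delta} \le C(1 + \hat d(z_0)^{\delta'})$ with $\delta > \delta'$, for some small positive exponents depending on $n,\alpha$ and the chosen constants, which forces $\hat d(z_0)$, hence $\Phi(z_0)$, to be bounded; evaluating at a general interior point $z$ and using $\dist(z,\partial\hat\Omega_{s,h})$ to bound $(-\hat v(z))$ from below and $|D\hat v|$ from above then gives \eqref{e222}. I expect the main obstacle to be bookkeeping the extra lower-order terms $\hat g$ and $\hat{\mathcal F}_t$ that are not present in the classical Lemma \ref{l202}: one must verify that every term containing $\hat v_1 + s$ in a denominator stays under control — which it does because $\hat v_1 + s \ge s$ by \eqref{e219} — and that the sign of the terms proportional to $\lambda \ge 0$ is the helpful one, so that choosing $A$ large and $\gamma$ small (depending only on $n,\alpha$) closes the argument. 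This is essentially the two-dimensional version of the estimate carried out for $\alpha = 0$ in \cite{Z1} and for $\alpha = \frac{1}{n+2}$ in \cite{TW05}, and the proof unifies and interpolates between those two cases.
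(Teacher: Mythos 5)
Your proposal is essentially the paper's argument: with $\gamma = 1-\alpha$, your $\log\Phi = \beta\log(-\hat v) + \gamma\log\hat d + \tfrac12 A|D\hat v|^2$ is (up to the factor $\tfrac12$ on the gradient term) exactly $-\eta$, where $\eta = \log w - \beta\log(-\hat v) - A|D\hat v|^2$ with $w = \hat d^{\alpha-1}$ is the test function used in the paper, and your interior-maximum-of-$\Phi$ computation is the paper's interior-minimum-of-$\eta$ computation, with $\lambda \ge 0$ and $\hat v_1 + s \ge s$ playing precisely the roles you identify. One inaccuracy worth flagging: the ``standard inequality'' you invoke to dominate third derivatives is stated in the wrong direction (Cauchy--Schwarz bounds $\hat v^{ij}(\log\hat d)_i(\log\hat d)_j$ \emph{above}, not below, by the full third-derivative contraction) and is in any case not used; the paper removes third derivatives entirely via the identity $\hat v^{ij}\hat v_{kij} = (\log\hat d)_k = \tfrac{1}{\alpha-1}\,w_k/w$ together with the first-order condition $\eta_i(z_0) = 0$, after which the remaining bookkeeping proceeds as you describe.
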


\begin{proof}
When $\alpha=\frac{1}{n+2}$, the estimate \eqref{e222} easily follows from the affine invariant property. Note that in this case, $\lambda=0$ and $\hat g$ in \eqref{e221} vanishes.  
The case of $\alpha=0$ was contained in \cite{Z1}. Here we give a proof for the remaining case
$0<\alpha< \frac{1}{n+2}$ as follows.
Let 
	\begin{equation}\label{e223}
		\eta=\log w-\beta\log(-\hat v)-A|D\hat v|^2,
	\end{equation}
where $w=\hat d^{\alpha-1}$, and $\beta, A$ are positive constants to be determined later. 
Since $\eta\to+\infty$ on $\partial\hat\Omega_{s,h}$, it attains a minimum at some point $z_0\in\hat\Omega_{s,h}$.
At $z_0$, we then have
	\begin{eqnarray}
		0\!\!&=&\!\!\eta_i=\frac{w_i}{w}-\frac{\eta\hat v_i}{\hat v}-2A\hat v_k\hat v_{ki}, \label{e224}\\
		0\!\!&\leq&\!\![\eta_{ij}]=\left[\frac{w_{ij}}{w}-\frac{w_iw_j}{w^2}-\frac{\beta\hat v_{ij}}{\hat v}+\frac{\beta\hat v_i\hat v_j}{\hat v^2}-2A\hat v_{ki}\hat v_{kj}-2A\hat v_k\hat v_{kij}\right] \label{e225}
	\end{eqnarray}
as a matrix. Since $w=[\det\,D^2\hat v]^{\alpha-1}$, we have
	\begin{equation}\label{e226}
		\hat v^{ij}\hat v_{kij}=(\log\det\,D^2\hat v)_k=\frac{1}{\alpha-1}\frac{w_k}{w},
	\end{equation}
where $(\hat v^{ij})=\hat d^{-1}(V^{ij})$ is the inverse of $D^2\hat v$.
We may assume that $\hat d>1$, otherwise the proof is done. 
Hence,
  \begin{equation}\label{e227}
    \frac{\hat v^{ij}w_{ij}}{w} = \frac{\hat g+\mathcal{\hat F}_t}{\hat d^\alpha} \leq -2\lambda\frac{w_1}{w}(\hat v_1+s)^{-1}-(1-\alpha)(n+2)\lambda\frac{\hat v_{11}}{(\hat v_1+s)^2}+\frac{\sup\hat F_t}{\alpha(\hat v_1+s)^\lambda}.
  \end{equation}
Therefore, we obtain
  \begin{eqnarray}\label{e228}
   0 \!\!&\leq&\!\! \hat v^{ij}\eta_{ij} \nonumber\\
	 &\leq&\!\! \frac{\sup\hat F_t}{\alpha(\hat v_1+s)^\lambda}+\frac{\lambda(\alpha-1)(n+2)\hat v_{11}}{(\hat v_1+s)^2}-4A\lambda\sum_{k=1}^n\frac{\hat v_{1k}\hat v_k}{\hat v_1+s}-2\lambda\beta\frac{\hat v_1}{(\hat v_1+s)\hat v} \nonumber\\
	 &&	-\frac{\beta n}{\hat v}-\left(2A\bigtriangleup\hat v-\frac{4A^2\alpha}{1-\alpha}\hat v_{ij}\hat v_i\hat v_j\right)-\left(4A\beta-\frac{2A\beta}{1-\alpha}\right)\frac{|D\hat v|^2}{\hat v}-(\beta^2-\beta)\frac{\hat v^{ij}\hat v_i\hat v_j}{\hat v^2} \\
	 &\leq&\!\! \frac{\sup\hat F_t}{\alpha(\hat v_1+s)^\lambda}-2\lambda\beta\frac{\hat v_1}{(\hat v_1+s)\hat v}-\frac{\beta n}{\hat v}-\frac{A}{2}\bigtriangleup\hat v-\left(4A\beta-\frac{2A\beta}{1-\alpha}\right)\frac{|D\hat v|^2}{\hat v}, \nonumber
  \end{eqnarray}
with the choice of $\beta>1$ and $A$ small enough such that
  \begin{equation}\label{e229}
   \frac{A}{2}\bigtriangleup\hat v \geq \frac{4A^2\alpha}{1-\alpha}\hat v_{ij}\hat v_i\hat v_j+CA^2\hat v_{11},
  \end{equation}
where $C$ is a constant depending only on $n, \alpha$ and $|D\hat v|$. Observing that
  \begin{equation}\label{e230}
   \frac{\hat v_1}{(\hat v_1+s)\hat v}=\frac{1}{\hat v}-\frac{s}{(\hat v_1+s)\hat v},
  \end{equation}
by choosing $\beta$ large enough such that
  \begin{equation}\label{e231}
   (-\hat v)(\hat v_1+s)^{1-\lambda}\sup\hat F_t \leq 2s\alpha\lambda\beta,
  \end{equation}
we have
  \begin{equation}\label{e232}
   -\frac{\beta(n+2\lambda)}{\hat v}-\frac{A}{2}\bigtriangleup\hat v-\left(4A\beta-\frac{2A\beta}{1-\alpha}\right)\frac{|D\hat v|^2}{\hat v}\geq0,
  \end{equation}
which implies
  \begin{equation}\label{e233}
   (-\hat v)\bigtriangleup\hat v\leq C.
  \end{equation}
It follows that $\eta(z)\geq\eta(z_0)\geq-C$ and so \eqref{e222} holds.
\end{proof}

\section{Approximations}

Let $u_0$ be the maximizer of \eqref{e102}. In this section, we prove that $u_0$ can be approximated by a sequence of smooth solutions to equation \eqref{e205}. The approximation enables us to apply the a priori estimates in Section 2.
For Monge-Amp\`ere equations, or general second order equations, one can
obtain the approximation from a perturbation of the equation. However, 
the perturbation does not work for fourth order
equations because of the lack of maximum principle. 
We will construct the approximation using a penalty method to the functionals. 
We also need to deal with the difficulty coming from the obstacle.

\subsection{Obstacle approximation}

Let $u_0$ be the maximizer of $J_\alpha$ in $\mathcal{S}[\varphi,\psi]$. We construct a sequence
of penalized functionals whose maximizers do not contact the obstacle 
and approximate $u_0$. Let $\mathcal{S}[\varphi, u_0]$ be the set of convex functions with 
$u_0$ as the obstacle, namely,
  \begin{equation}\label{e301}
   \mathcal{S}[\varphi,u_0]=\left\{v\in C(\overline\Omega)\,:\,v \mbox{ convex }, v|_{\partial\Omega}=\varphi, Dv(\Omega)\subset D\varphi(\overline\Omega), v\geq u_0\mbox{ in }\Omega\right\},
  \end{equation}
where $\varphi$ is a smooth, uniformly convex function defined on a neighborhood of $\overline\Omega$.

\begin{lemma}\label{l301}
   Suppose $0\leq \alpha\leq\frac{1}{n+2}$. There exists a sequence of functions $\{u_i\}$ in $\mathcal{S}[\varphi, u_0]$ such that 
   each $u_i$ is the maximizer of the functional 
   $$J^i_{\alpha}(v)=J_\alpha(v)-\int_\Omega G_i(x,v), \ v\in \mathcal{S}[\varphi, u_0]$$
   and  $u_i\to u_0$ as $i\to \infty$, 
  where $G_i(x,t)$ is a smooth, convex function monotone decreasing in $t$ . 
  Furthermore, there is no obstacle for $u_i$ in $\Omega$, 
  i.e., $u_i(x)>u_0(x)$, $x\in \Omega$.
\end{lemma}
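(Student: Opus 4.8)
The plan is to introduce a penalty term that is large when $v$ is close to the obstacle $u_0$ and vanishes when $v$ is bounded away from it, so that a maximizer of the penalized functional is automatically pushed off the obstacle. Concretely, I would fix a smooth, convex, decreasing function $\gamma:\mathbb{R}\to[0,\infty)$ with $\gamma(t)=0$ for $t\geq 1$, $\gamma(t)>0$ for $t<1$, and $\gamma(t)\to\infty$ as $t\to -\infty$ (for instance a smoothed version of $(1-t)_+$ composed with something growing, or simply $\gamma(t)=0$ for $t\geq 0$, $\gamma(t)=t^2$ near $0^-$ and steeper further down — the precise profile is routine). Then set $G_i(x,t)=i\,\gamma\!\bigl(i(t-u_0(x))\bigr)$; this is smooth in $t$, convex and monotone decreasing in $t$, and as a function on $\Omega\times\mathbb{R}$ it is measurable in $x$ since $u_0\in C(\overline\Omega)$. (If one wants $G_i$ genuinely smooth in $x$ as well, first replace $u_0$ by a smooth mollification $u_0^{(\varepsilon_i)}\geq u_0 - \varepsilon_i$ on a slightly smaller domain, with $\varepsilon_i\to 0$; this does not affect any of the estimates below.)

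Next I would run the existence/uniqueness machinery of Lemma 2.1 for the modified functional $J^i_\alpha(v)=J_\alpha(v)-\int_\Omega G_i(x,v)$ on the class $\mathcal{S}[\varphi,u_0]$. The nonlinear part $A_\alpha$ is unchanged, so it remains upper semicontinuous and concave; the extra term $-\int_\Omega G_i(x,v)$ is continuous under locally uniform convergence of convex functions (dominated convergence, using that all competitors lie below $\varphi$ and above $u_0$, hence the $G_i$-values are uniformly bounded on this class for fixed $i$) and is concave in $v$ because $G_i$ is convex in $t$. Boundedness above is clear since $J_\alpha$ is bounded above on $\mathcal{S}[\varphi,u_0]\subset\mathcal{S}[\varphi,\psi]$ (note $u_0\geq\psi$, so this is a subclass) and $G_i\geq 0$. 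Hence $J^i_\alpha$ attains a unique maximizer $u_i\in\mathcal{S}[\varphi,u_0]$.

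The key point is then the two assertions: $u_i(x)>u_0(x)$ in $\Omega$, and $u_i\to u_0$. For the first, suppose the contact set $\{u_i=u_0\}$ has a point $x_0\in\Omega$. Comparing $u_i$ with a slightly lifted competitor — e.g. replace $u_i$ near $x_0$ by $\max(u_i, \ell)$ where $\ell$ is an affine function slightly above $u_0(x_0)$, or more robustly perturb $u_i$ upward by a small multiple of a fixed convex bump supported near the contact set — one checks that this perturbation increases $J^i_\alpha$: the loss in $A_\alpha$ is controlled (the Monge–Ampère measure only decreases under an upper envelope, and for $\alpha\le 1/(n+2)$ one tracks the sign carefully as in \cite{TW05}), while the gain $\int_\Omega [G_i(x,u_i)-G_i(x,v)]\,dx$ is strictly positive and of lower order in the size of the bump because $G_i$ has a large negative derivative on the contact set. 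This contradicts maximality, giving $u_i>u_0$. For the convergence, since $\{u_i\}$ is a sequence of convex functions trapped between $u_0$ and $\varphi$ with $u_i=\varphi$ on $\partial\Omega$, it is precompact in $C_{loc}(\Omega)$; let $\bar u$ be any subsequential limit, which lies in $\mathcal{S}[\varphi,u_0]\subset\mathcal{S}[\varphi,\psi]$. Testing the maximality of $u_i$ against the fixed competitor $u_0$ gives $J_\alpha(u_i)-\int_\Omega G_i(x,u_i)\geq J_\alpha(u_0)-\int_\Omega G_i(x,u_0)=J_\alpha(u_0)$ (the last equality because $G_i(x,u_0(x))=i\gamma(0)$ is a fixed finite constant — or $0$, with the normalization $\gamma(0)=0$ — independent of $i$, up to the $\varepsilon_i$ mollification error which tends to $0$). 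Hence $\int_\Omega G_i(x,u_i)\leq J_\alpha(u_i)-J_\alpha(u_0)\leq C$; combined with $G_i\geq i\gamma(\cdot)$ and Fatou, this forces $\bar u\geq u_0$ a.e. with $\int_\Omega G_i(x,u_i)$ staying bounded, and then upper semicontinuity of $J_\alpha$ plus $J_\alpha(u_i)\geq J_\alpha(u_0)$ yields $J_\alpha(\bar u)\geq\limsup J_\alpha(u_i)\geq J_\alpha(u_0)$. Since $\bar u\in\mathcal{S}[\varphi,\psi]$ and $u_0$ is the \emph{unique} maximizer of $J_\alpha$ there (Lemma 2.1), $\bar u=u_0$; as the subsequence was arbitrary, $u_i\to u_0$.

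The main obstacle I expect is the strict inequality $u_i>u_0$: one has to make the perturbation argument genuinely rigorous for the possibly non-smooth maximizer $u_i$ and the non-smooth $\log\det$ functional ($\alpha=0$), keeping track of how $A_\alpha$ changes under replacing $u_i$ by a convex minorant's modification. The correct device is to compare against the upper envelope of $u_i$ and $u_0+\delta\phi$ for a small fixed uniformly convex $\phi\leq 0$ vanishing on $\partial\Omega$, use that the Monge–Ampère measure is monotone under such operations and that for $0\le\alpha\le\frac1{n+2}$ the functional $A_\alpha$ responds in the right direction (this is exactly the concavity/monotonicity structure already exploited in Lemma 2.1 and in \cite{TW05, Z2}), and balance against the linear-order gain in the penalty. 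Everything else is a fairly standard application of the direct method together with the uniqueness in Lemma 2.1.
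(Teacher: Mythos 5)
Your route is genuinely different from the paper's and, as written, it has gaps that the paper's choice of penalty is specifically designed to avoid. The paper takes $G(x,t)\geq a(x)\,(t-u_0(x))^{-n}$ for $t>u_0(x)$, so the penalty \emph{blows up} as $v\to u_0$. Then, if a maximizer $v_g$ touched $u_0$ at an interior $x_0$, the cone-plane sandwich $|v_g-u_0|\leq C|x-x_0|$ forces $\int_\Omega G(x,v_g)\geq C\int_\Omega|x-x_0|^{-n}=\infty$, so $J_{\alpha,g}(v_g)=-\infty$, an immediate contradiction. No perturbation of $v_g$ is needed, and all the delicate questions you flag (how $A_\alpha$ responds when you replace $u_i$ by an upper envelope, whether the perturbed competitor stays in $\mathcal S[\varphi,u_0]$, what happens for $\alpha=0$) simply never arise. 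You use a bounded penalty that vanishes for $v\geq u_0+1/i$, and you then must \emph{prove} $u_i>u_0$ by a variational bump argument; you correctly identify this as ``the main obstacle'' but leave it as a sketch. That step is precisely where the content of the lemma sits, so in its present form your proof has a real hole.

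There is also an internal inconsistency in your convergence step. You set $\gamma(t)>0$ for $t<1$ and $\gamma(t)=0$ for $t\geq1$, so $\gamma(0)>0$, and hence $\int_\Omega G_i(x,u_0)=i\gamma(0)|\Omega|\to\infty$ --- it is not ``a fixed finite constant independent of $i$.'' Testing maximality against $u_0$ then gives only $\int_\Omega G_i(x,u_i)\leq i\gamma(0)|\Omega|$, which is not a uniform bound, so your Fatou argument does not close. If instead you normalize $\gamma(0)=0$, then (since $\gamma\geq0$ and decreasing) $\gamma\equiv0$ on $[0,\infty)$, hence $G_i\equiv0$ on $\mathcal S[\varphi,u_0]$ and the penalty does nothing. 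Either way the choice of $\gamma$ does not do what you want. The correct fix is exactly the paper's: use a penalty that diverges at the obstacle (so $u_i>u_0$ is automatic), note $u_0$ itself has $J_{\alpha,g}(u_0)=-\infty$ so one cannot test against $u_0$ directly, and instead test against the concave interpolation $v_\theta=(1-\theta)u_0+\theta\varphi$, using concavity of $A_\alpha$ to see $J_\alpha(v_\theta)\to J_\alpha(u_0)$ as $\theta\to0$ while $\int_\Omega G(x,v_\theta)<\infty$ thanks to the condition that $a(x)\to0$ fast enough near $\partial\Omega$. Upper semicontinuity and uniqueness of the maximizer of $J_\alpha$ then yield $v_{\varepsilon_i}\to u_0$, as in the paper.
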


\begin{proof}
First, we consider a penalized problem. The idea is inspired by \cite{STW}. Define
\begin{equation}\label{e302}
J_{\alpha, g}(v)=J_\alpha(v)-\int_\Omega G(x,v),
\end{equation}
where $G(x,t)$ is a smooth, convex function monotone decreasing in $t$ such that
  \begin{equation}\label{e303}
     G(x,t)\geq a(x)(t-u_0(x))^{-n}\quad\mbox{for }t>u_0(x), \ x \in \Omega.
  \end{equation}
Here $a$ is  a positive function in $\Omega$, with $a(x)\to 0$ fast enough as $x\to\partial\Omega$ such that the set 
$\{v\in \mathcal{S}[\varphi, u_0]\,:\,J_{\alpha, g}(v)>-\infty\}\neq\emptyset$. 
It is clear that $J_{\alpha,g}$ is still concave, upper semi-continuous and bounded from above.    Hence there is a unique maximizer $v_g$ to the problem
  \begin{equation}\label{e304}
   \sup\{J_{\alpha, g}(v)\,:\,v\in \mathcal{S}[\varphi, u_0]\}.
  \end{equation}
  
We claim that for any $x\in\Omega$,
	\begin{equation}\label{e305}
		v_g(x)>u_0(x). 
	\end{equation}
Indeed, if there is a point $x_0\in\Omega$ such that $v_g(x_0)=u_0(x_0)$, by convexity the graphs of $v_g$ and $u_0$ are bounded by the cone $\mathcal{K}$ and the hyperplane $\mathcal{P}$, where $\mathcal{K}$ has the vertex at $(x_0,u_0(x_0))$ and passes through $(\partial\Omega,u_0|_{\partial\Omega})$, and $\mathcal{P}$ is the support plane of $u_0$ at $x_0$. Then we have $|v_g(x)-u_0(x)|\leq C|x-x_0|$. Hence by the assumption on $G(x, t)$,
	\begin{equation}\label{e306}
		\int_{\Omega}G(x,v_g(x))\geq C\int_{\Omega}|x-x_0|^{-n}=\infty.
	\end{equation}
That is, $v_g$ cannot be a maximizer. 

Replacing $G$ by $\varepsilon_i G$ for a sequence $\varepsilon_i\to0$, accordingly there exists a sequence of maximizers $v_{\varepsilon_i}$ to \eqref{e304}. Since $u_0$ is itself a maximizer, we have $v_{\varepsilon_i}\to u_0$ as $\varepsilon_i\to 0$ by the concavity of the functional $J_\alpha$. Hence, the sequence $u_i$ can be chosen from $v_{\varepsilon_i}$.
\end{proof}

\begin{remark}
If $u_i$ is smooth, it satisfies the equation
	\begin{equation}\label{e307}
		L[u]=f+g_i \quad\mbox{in }\Omega,
	\end{equation}
where $L$ is the operator in \eqref{e104}, 
and $g_i=\frac{\partial}{\partial t}G_i(x,t)$ at $t=u_i(x)$. 
In the later proof of strict convexity, we will need the 
upper bound estimate for the determinant of $D^2u_0$ which depends on
$\sup f$. Since $g_i<0$ in the above approximation, the estimate in Section 2 still applies 
when turning to the sequence $u_i$.

When studying the strict convexity of enclosed convex hypersurfaces with maximal affine area, one can assume $u_0$ is equal to a linear function $\ell$ on $\partial\Omega$ \cite{STW}, then the above proof can be simplified. 
\end{remark}

\begin{remark}
In fact, the approximation in Lemma \ref{l301} applies on any subdomain $\Omega'\subset\Omega$.
Instead of considering the boundary $\varphi$, one can consider
	\[\mathcal{S}_{\Omega'}[u_0]=\{v\in C(\overline\Omega')\,:\,v \mbox{ convex}, v|_{\partial\Omega'}=u_0|_{\partial\Omega'}, Dv(\Omega')\subset Du_0(\overline\Omega'), v\geq u_0\},\]
and then obtain a local approximation sequence. 
\end{remark}

\subsection{Smooth approximation}

Let $u$ be the maximizer of \eqref{e304}. From the obstacle approximation, $u$ is also the maximizer of \eqref{e302} over the set
	\begin{equation}\label{e308}
		\mathcal{S}[\varphi,\Omega]=\left\{v\in C(\overline\Omega)\,:\,v \mbox{ convex}, v|_{\partial\Omega}=\varphi|_{\partial\Omega}, Dv(\Omega)\subset D\varphi(\overline\Omega)\right\}.
	\end{equation}
In this subsection, we prove that $u$ can be approximated by smooth solutions of 
  \begin{equation}\label{e309}
   U^{ij}w_{ij}=f(x,u),
  \end{equation}
where $U^{ij}$ is the cofactor of $D^2u$ and $w=[\det\,D^2u]^{\alpha-1}$.
This approximation enables us to apply the a priori estimates in Section 2.

\begin{lemma}\label{l302}
Let $u$ be the maximizer of \eqref{e304}. Suppose $\partial\Omega$ is Lipschitz continuous. 
Then there exists a sequence of smooth solutions to equation \eqref{e309} converging locally uniformly to the maximizer $u$.
\end{lemma}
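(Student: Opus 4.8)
The plan is to perturb the \emph{functional} rather than the equation---the latter being ineffective here, as noted at the beginning of this section, since the fourth order operator in \eqref{e309} has no maximum principle---and to couple this with an exhaustion of $\Omega$ by smooth domains. First I would invoke the locality of the variational problem \eqref{e304} recorded in the remark after Lemma \ref{l301} on subdomains: if $\Omega$ is exhausted by an increasing sequence of smooth, uniformly convex subdomains $\Omega_k\Subset\Omega$, then the restriction $u|_{\Omega_k}$ is the maximizer of $J_{\alpha,g}$ over the obstacle-free class of the form \eqref{e308} with boundary values $u|_{\partial\Omega_k}$ and subgradient constraint $Dv(\Omega_k)\subset Du(\overline{\Omega_k})$. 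Here there is no obstacle on $\Omega_k$ since $u>u_0$ in $\Omega$ (cf.\ \eqref{e305}), and the cone-and-hyperplane barrier argument of Lemma \ref{l301} shows that the maximizer on $\Omega_k$ also stays strictly above $u_0$, so that $\int_{\Omega_k}G(x,\cdot)$ contributes only a smooth, locally bounded lower order term. Since $u$ is convex and finite, $u|_{\partial\Omega_k}$ is continuous, and I would replace it by smooth, uniformly convex data $\varphi_{k,m}$ converging to it (for instance by mollifying $u$ on a slightly larger subdomain and adding a quadratic of vanishing modulus of convexity, perturbing the subgradient constraint accordingly); by Lemma \ref{l201} and the stability of maximizers of the concave, upper semicontinuous functional $J_{\alpha,g}$, the associated maximizers converge locally uniformly to $u|_{\Omega_k}$ as $m\to\infty$, while $u|_{\Omega_k}\to u$ as $k\to\infty$. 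It therefore suffices to produce, on each fixed smooth, uniformly convex domain $\Omega'$ with smooth uniformly convex boundary data, a smooth solution of \eqref{e309} which is the maximizer of $J_{\alpha,g}$ there.

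On such a $\Omega'$ I would argue as follows. For $\alpha=0$ the smoothness of this maximizer is contained in \cite{Z1,Z2}, and for $\alpha=\tfrac{1}{n+2}$ in \cite{TW05}. For the remaining range $0<\alpha<\tfrac{1}{n+2}$ I would first introduce a non-degenerate regularization, replacing $J_{\alpha,g}$ by $J_{\alpha,g}+\epsilon A_0$ with $A_0(v)=\int_{\Omega'}\log\det D^2v$ as in \eqref{e204}; this preserves concavity and upper semicontinuity, so Lemma \ref{l201} gives a unique maximizer $u_\epsilon$, and the added $\epsilon\log\det$ term prevents the Monge-Amp\`ere measure from degenerating, so that $u_\epsilon$ is a smooth, uniformly convex solution of the perturbed Euler-Lagrange equation by the interior and boundary regularity theory for these fourth order equations together with the a priori bounds of \cite{TW00,TW05}. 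Letting $\epsilon\to0$, the interior determinant estimates of Section~2---Lemmas \ref{l202} and \ref{l204}, which hold for the general functional \eqref{e215} and are insensitive to the sign and size of its lower order terms, hence to the $\epsilon$-term---give an upper bound $\det D^2u_\epsilon\le C$ on compact subsets; passing to the Legendre transform and applying Lemma \ref{l203} gives a positive lower bound for $\det D^2u_\epsilon$ there; and Theorem \ref{t201}(i) then furnishes uniform $W^{4,p}_{\mathrm{loc}}(\Omega')$ bounds. Hence $u_\epsilon$ converges in $C^{3,\gamma}_{\mathrm{loc}}(\Omega')$ to a smooth solution of \eqref{e309} which, being a maximizer of the concave limit functional $J_{\alpha,g}$, is the maximizer.

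Combining the two steps and diagonalizing over $k$, $m$ and $\epsilon$ yields a sequence of smooth solutions of \eqref{e309} converging locally uniformly to $u$, which is the assertion. The main obstacle is the second step: showing that on a fixed smooth, uniformly convex domain the maximizer of this degenerate fourth order problem is genuinely smooth and attains its boundary data, which forces one to bound $\det D^2u$ from below---this is precisely where the Legendre transform and the two-dimensional interior estimate of Lemma \ref{l203} enter, and it is the reason the argument is delicate and why, as remarked in the introduction, the case $\alpha=0$, $n>2$ is left open. A secondary technical point is to check that the mollified data $\varphi_{k,m}$ can be chosen uniformly convex and compatible with the subgradient constraint and that $G$ stays bounded along the approximating maximizers, both of which follow from the same barrier argument as in Lemma \ref{l301}.
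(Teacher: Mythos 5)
The paper's proof rests entirely on Lemma~\ref{l303}: it extends $\varphi$ to a large ball $B$, solves the \emph{second boundary value problem} \eqref{e312} there with the weighted source $f_j$ of \eqref{e314} (which carries the penalty $H_j'(u-\varphi)$ outside $\Omega$), obtains smooth solutions $u_j$ with $|u_j-\varphi|\le 4^{-j}$ on $B\setminus\Omega$, and shows that the limit is the maximizer by concavity and uniqueness. Your proposal does not invoke Lemma~\ref{l303} at all, and this omission is where your argument has a genuine gap.

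The gap is in your second step, where you assert that on a fixed smooth uniformly convex $\Omega'$ the regularized functional $J_{\alpha,g}+\epsilon A_0$ has a maximizer $u_\epsilon$ that is already a \emph{smooth} solution of the perturbed Euler--Lagrange equation, ``because the $\epsilon\log\det$ term prevents degeneration'' and one can then apply the a priori estimates of Section~2. This is circular: those a priori estimates (Lemmas~\ref{l202}, \ref{l203}, \ref{l204}, Theorem~\ref{t201}) are for functions that \emph{already solve} \eqref{e205}, whereas what you need to show is precisely that the constrained maximizer $u_\epsilon$ of the variational problem over $\mathcal{S}[\varphi,\Omega']$ satisfies the Euler--Lagrange equation. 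For a constrained maximizer there is no reason the subgradient constraint $Dv(\Omega')\subset D\varphi(\overline{\Omega'})$ is inactive, and with no maximum principle for the fourth order operator one cannot bootstrap regularity from the variational characterization directly. Adding $\epsilon\log\det$ merely changes $\alpha$ effectively toward $0$; it does not make the maximizer smooth --- indeed the case $\alpha=0$ that you cite to \cite{Z1,Z2} is handled in those references via exactly the second boundary value problem mechanism of Lemma~\ref{l303}, not by an intrinsic regularization of the functional. The entire point of Lemma~\ref{l303} is to produce smooth solutions of the PDE \emph{first} (on a ball where boundary regularity is available because $w=1$ there) and only afterwards show they converge to the maximizer; your plan reverses this order and thereby assumes what must be proved.

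Two secondary problems compound this. First, your interior estimates require a uniform-in-$\epsilon$ control on the modulus of convexity of $u_\epsilon$ (see the dependence of the constants in Theorem~\ref{t201} and Lemma~\ref{l203}), which you do not establish; the Legendre-dual step in particular needs $u_\epsilon^*$ to vanish on a fixed $\partial\Omega^*$. Second, the exhaustion-and-mollification scaffolding, while plausible, produces approximants defined only on varying $\Omega_k$; the paper's construction on the fixed ball $B$ avoids this and, more importantly, produces approximants that automatically lie close to $\varphi$ outside $\Omega$, which is what forces the limit into $\mathcal{S}[\varphi,\Omega]$. If you want to repair the proposal, the missing ingredient is exactly Lemma~\ref{l303}: solve \eqref{e312} on $B$ with $f_j$ as in \eqref{e314}, then pass to the limit.
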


To prove the approximation, first we recall the existence and regularity of solutions of the following second boundary value problem \cite{TW05}. 
Let $B=B_R(0)$ be a ball such that $\Omega\Subset B_{R-1}(0)$ and $\phi$ is a smooth, uniformly convex function in $B$ and $\phi=c^*$ is constant on $\partial B$. 
Let 
  \begin{equation}\label{e310}
   H(t)=(1-t^2)^{-2n}
  \end{equation}
be a nonnegative smooth function in the interval $(-1,1)$. When $|t|>1$, we can formally define $H(t)=+\infty$. 
Extend the function $f$ in \eqref{e309} to $B$ such that
  \begin{equation}\label{e311}
   f=\left\{\begin{array}{ll}
      f(x) & x\in\Omega,\\
      H'(u-\phi(x)) & x\in B\setminus\Omega.
   \end{array}\right.
  \end{equation}
  
\begin{lemma}\label{l303}
 Suppose $\partial\Omega$ is Lipschitz continuous. Then there is a uniformly convex solution $u\in W^{4,p}_{loc}(B)\cap C^{0,1}(\overline B)$ (for all $p<\infty$) with $\det\,D^2u\in C^0(\overline\Omega)$ of the boundary value problem
  \begin{eqnarray}\label{e312}
   U^{ij}w_{ij}\!\!&=&\!\!f(x,u)\quad\mbox{in }B,\\
   u\!\!&=&\!\!\phi \, (=c^*)\quad\mbox{on }\partial B,\nonumber \\
   w\!\!&=&\!\!1\quad\mbox{on }\partial B. \nonumber
  \end{eqnarray}
\end{lemma}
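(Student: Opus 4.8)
The plan is to solve \eqref{e312} by a continuity/approximation argument, replacing the possibly unbounded right-hand side $H'(u-\phi)$ by bounded data, solving the regularized problems via known existence theory for the fourth-order equation, and then passing to the limit using the a priori estimates of Section 2. First I would truncate: for $k\in\mathbb N$ set $H_k(t)=\min\{H(t),k\}$ extended smoothly, so that $f_k$ defined as in \eqref{e311} with $H$ replaced by $H_k$ is a bounded, continuous function of $(x,t)$. For fixed $k$, the existence of a uniformly convex solution $u_k\in W^{4,p}_{loc}(B)\cap C^{0,1}(\overline B)$ of the second boundary value problem \eqref{e312} with data $f_k$ and $w=1$, $u=c^*$ on $\partial B$ is the content of the existence theory in \cite{TW05} (for $\alpha=\tfrac1{n+2}$) and its extension to $0\le\alpha<\tfrac1{n+2}$; alternatively one obtains $u_k$ as the maximizer of the corresponding functional $J_\alpha(v)-\int_B(F_k(x,v)-v)$ on convex functions with the prescribed boundary trace, where $F_k$ is a primitive of $f_k$ in $t$, and this maximizer is smooth and uniformly convex in $B_{R-1}(0)\supset\overline\Omega$ by Theorem \ref{t201} together with the determinant estimates of Lemmas \ref{l202} and \ref{l203} (here the auxiliary barrier $H$ forces the solution to stay in the strip $|u-\phi|<1$, providing the needed $C^{0,1}(\overline B)$ bound and keeping $u_k-\phi$ away from $\pm1$ in the interior).

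Next I would derive uniform-in-$k$ estimates. The key observation is that $H'\le 0$ to the left of $0$ and $H'\ge 0$ to the right, and that any solution must satisfy $-1<u_k-\phi<1$ on $B\setminus\Omega$: if $u_k-\phi$ approached $1$ somewhere, the penalization term $H'_k$ would become large and positive and, arguing as in the proof of \eqref{e305}--\eqref{e306} or by the convexity comparison with cone and hyperplane, this is incompatible with $u_k$ being a solution/maximizer for $k$ large. This yields a uniform modulus-of-convexity bound and uniform $C^{0,1}(\overline B)$ and sup bounds, hence, via Lemma \ref{l202} (and its $n=2$ version removing the boundary hypothesis) and Lemma \ref{l203}, uniform upper and lower bounds on $\det D^2 u_k$ on compact subsets of $B$; then part (i) of Theorem \ref{t201} gives $\|u_k\|_{W^{4,p}(\Omega')}\le C$ for every $\Omega'\Subset B$ and every $p<\infty$, with $C$ independent of $k$.

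Finally I would pass to the limit: by the uniform $W^{4,p}_{loc}$ and $C^{0,1}(\overline B)$ bounds and compact embedding, a subsequence of $\{u_k\}$ converges in $C^{3}_{loc}(B)\cap C^0(\overline B)$ (and weakly in $W^{4,p}_{loc}$) to a limit $u\in W^{4,p}_{loc}(B)\cap C^{0,1}(\overline B)$ which is uniformly convex, satisfies $u=\phi=c^*$ and $w=1$ on $\partial B$, and inside $\Omega$ solves $U^{ij}w_{ij}=f(x,u)$ since there $f_k=f(x,\cdot)$ is independent of $k$; on $B\setminus\Omega$ the strict bound $|u-\phi|<1$ shows $H_k'(u-\phi)=H'(u-\phi)$ for $k$ large, so $u$ solves the stated equation there as well, and $\det D^2u\in C^0(\overline\Omega)$ follows from the interior regularity and the continuity of $\det D^2u$ up to $\partial\Omega$ proved in \cite{TW05}. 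The main obstacle is the uniform control near $\partial\Omega$: one must ensure that the truncation does not destroy the barrier effect of $H$, i.e.\ that $u_k-\phi$ stays strictly inside $(-1,1)$ uniformly in $k$, so that Lemmas \ref{l202} and \ref{l203} apply with constants independent of $k$ up to and across $\partial\Omega$; this is exactly the point where the precise form $H(t)=(1-t^2)^{-2n}$ and the constancy of $\phi$ and $w$ on $\partial B$ are used.
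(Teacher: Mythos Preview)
Your proposal takes essentially the same route as the paper, which does not give a self-contained proof of this lemma but defers to \cite{TW05,TW10,Z1} and singles out as the crucial ingredient the a priori bound \eqref{e313}, i.e., that $|u-\phi|$ stays strictly inside $(-1,1)$ on $B\setminus\Omega$ so that $H'(u-\phi)$ is bounded; your truncation-and-limit scheme and your identification of this barrier effect as the main obstacle are exactly in line with that treatment.

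One small caution: Lemma~\ref{l203}, which you invoke for the lower determinant bound, is stated in the paper only for $n=2$, whereas Lemma~\ref{l303} carries no dimensional restriction. For the second boundary value problem on the ball the lower bound on $\det D^2u$ in general dimension comes from the boundary condition $w=1$ on $\partial B$ together with the analysis in \cite{TW05}, not from the Legendre-transform argument of Lemma~\ref{l203}. Likewise, the precise mechanism in \cite{TW05} that forces $|u_k-\phi|<1$ uniformly in $k$ uses the specific growth of $H(t)=(1-t^2)^{-2n}$ combined with the convexity of $u_k$ and the smoothness of $\phi$ (a quadratic, not linear, comparison at an interior extremum of $u_k-\phi$), so your reference to \eqref{e305}--\eqref{e306} is by analogy rather than a direct citation; but the overall strategy is correct.
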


The existence and regularity of solutions of \eqref{e312} was previously obtained in \cite{TW05,TW10} for $\alpha=\frac{1}{n+2}$, and \cite{Z1} for $\alpha=0$. The crucial ingredient is to establish  
  \begin{equation}\label{e313}
   |f(x,u)|\leq C
  \end{equation}
for some constant $C>0$ independent of $u$. Once $f$ is bounded, the regularity and existence of solutions follow easily from \cite{TW05}. The global $C^{4,\alpha}$ regularity was recently proved in \cite{TW08}.
Following the argument in \cite{TW05}, one can easily check the proof works for all $\alpha\in(0,\frac{1}{n+2})$. 

Now, we show that the maximizer of $J_\alpha(u)$ can be approximated by smooth solutions to equation \eqref{e309}.
\begin{proof}[Proof of Lemma \ref{l302}]
By assumption $\varphi$ is smooth, uniformly convex in a neighborhood of $\Omega$, so we can extend it to $B=B_R$ such that $\varphi$ is convex in $B$, $\varphi\in C^{0,1}(\overline B)$ and $\varphi$ is constant on $\partial B$. Replacing $\varphi$ by $\varphi+\left(|x|-R+\frac12\right)_+^2$, where
	\[\left(|x|-R+\frac12\right)_+=\max\left\{|x|-R+\frac12,0\right\},\]
we also assume that $\varphi$ is uniformly convex in $\{x\in\mathbb{R}^n\,:\,R-\frac12<|x|<R\}$.
Consider the second boundary value problem \eqref{e312} with
	\begin{equation}\label{e314}
		f_j(x,u)=\left\{\begin{array}{ll}
			f & \mbox{in }\Omega \\
			H'_j(u-\varphi) & \mbox{in }B\setminus\Omega,
		\end{array}\right.
	\end{equation}
where $H_j(t)=H(4^jt)$ and $H$ is defined by \eqref{e310}.
By Lemma \ref{l303} there is a solution $u_j$ satisfying 
	\begin{equation}\label{e315}
		|u_j-\varphi|\leq 4^{-j},\quad x\in B\setminus\Omega.
	\end{equation}
By the convexity, $u_j$ sub-converges to a convex function $\bar u$ in $B$ as $j\to\infty$. Note that $\bar u=\varphi$ in $B\setminus\Omega$. Hence, $\bar u\in\mathcal{S}[\varphi,\Omega]$ when restricted in $\Omega$. Using a similar argument as in \cite{TW10} and \cite{Z1}, one can show that $\bar u$ is the maximizer of \eqref{e302} over the set \eqref{e308}. 
By the uniqueness of maximizer, we obtain $\bar u=u$.
The main ingredients of the argument in \cite{TW10} are the upper semicontinuity and the concavity of the functional \eqref{e302}, which hold for all $\alpha\in[0,\frac{1}{n+2}]$, see Lemma \ref{l201}.
\end{proof}

\section{Strict convexity}

In this section, we prove the strict convexity of $u_0$ in dimension two.
Let $\mathcal{G}_0$ be the graph of $u_0$. If $u_0$ is not strictly convex, then $\mathcal{G}_0$ contains a line segment.
Let $\ell(x)$ be a tangent function of $u_0$ at the segment and denote by
  \begin{equation}\label{e401}
   \mathcal{C}=\{x\in\Omega\,:\,u_0(x)=\ell(x)\}
  \end{equation}
the contact set. The set $\mathcal{C}\subset\mathbb{R}^2$ is bounded and convex.

We say a point $x_0\in\partial U$ is an extreme point of a bounded convex domain $U\subset\mathbb{R}^n$ if there is a hyperplane $P$ such that $\{x_0\}=P\cap\partial U$, namely the intersection $P\cap\partial U$ is the single point $x_0$. We divide our discussion into the following two cases:
\begin{itemize}
 \item[(a)]: $\mathcal{C}$ has an exteme point $x_0$, which is an interior point of $\Omega$;
 \item[(b)]: All extreme points of $\mathcal{C}$ lie on $\partial\Omega$.
\end{itemize}

We will rule out the possibility of both cases, and thus $u_0$ is strictly convex. The basic observation is that a convex function with a bounded Monge-Amp\`ere measure is differentiable at any point on its graph, not lying on a line segment joining two boundary points, \cite{Caf1}. In dimension two, recall the following 
\begin{lemma}[\cite{TW05}]\label{l401}
 Suppose $u$ is a nonnegative convex function in a domain $\Omega\subset\mathbb{R}^2$. The origin $0\in\Omega$ is an interior point. $u$ satisfies $u>0$ on $\partial\Omega$, $u(0)=0$ and $u(x_1,0)\geq|x_1|$. Then the Monge-Amp\`ere measure $\mu[u]$ cannot be a bounded function. 
\end{lemma}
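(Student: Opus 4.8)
The plan is to argue by contradiction, assuming that $\mu[u]$ is bounded, say $\mu[u] \leq \Lambda$ as a measure, and then to derive a contradiction with the growth hypothesis $u(x_1,0) \geq |x_1|$ along the axis. The central tool will be a comparison argument using explicit convex barriers whose Monge-Amp\`ere measure can be computed, combined with the normalization/affine-invariance structure of the Monge-Amp\`ere operator in dimension two. First I would localize: since $u > 0$ on $\partial\Omega$ and $u(0) = 0$, for small $h > 0$ the sublevel set $\Omega_h = \{x \in \Omega : u(x) < h\}$ is a bounded convex neighborhood of the origin, compactly contained in $\Omega$ for $h$ small. On $\Omega_h$ we have $0 \leq u < h$, and the segment condition $u(x_1,0) \geq |x_1|$ forces $\Omega_h$ to be \emph{thin} in the $x_1$-direction: it is contained in the strip $\{|x_1| < h\}$, while it may be long in the $x_2$-direction.

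The key step is to exploit this eccentricity. Let $2a$ denote the $x_1$-width and $2b$ the $x_2$-extent of (a John ellipse of) $\Omega_h$, so $a \leq h$ by the growth hypothesis, while $b$ is controlled below away from $0$ because $u(0)=0$ and $u < h$ on $\Omega_h$ (indeed the restriction of $u$ to the $x_2$-axis is convex, vanishes at $0$, and stays below $h$, so it cannot grow too fast, giving $b \geq c$ for a fixed $c > 0$ depending only on $\sup_{\partial\Omega} $-data and not on $h$ — more precisely $b \to b_0 > 0$ or $b$ stays bounded below as $h \to 0$). Now I would build a lower barrier for $u$ on $\Omega_h$ of the form of an ellipsoidal paraboloid $P(x) = \tfrac{x_1^2}{a} + \tfrac{x_2^2}{b^2}\,\varepsilon$ adapted to the John ellipse, or more robustly invoke Aleksandrov's maximum principle / the comparison principle: since $u = h$ on $\partial\Omega_h$ and $\mu[u] \leq \Lambda$, one gets the lower bound
\begin{equation*}
 u(x) \geq h - C\,\Lambda^{1/2}\,\bigl(\dist(x,\partial\Omega_h)\bigr)^{1/2}\,(\diam \Omega_h)^{?}
\end{equation*}
— but the clean route is: Aleksandrov's estimate gives $|u(0) - h|^2 \leq C_n\, \dist(0,\partial\Omega_h)\cdot \diam(\Omega_h)^{n-1}\cdot \|\mu[u]\|$. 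With $n = 2$, $u(0) = 0$, $\dist(0,\partial\Omega_h) \leq a \leq h$, and $\diam(\Omega_h) \leq 2b$ bounded, this reads $h^2 \leq C\, h \cdot b \cdot \Lambda$, i.e. $h \leq C b \Lambda$, which is automatically true for small $h$ and gives nothing. So the naive Aleksandrov bound is too weak, and the real content must use the \emph{full} strip confinement $\Omega_h \subset \{|x_1| < h\}$ together with a measure estimate that sees the second direction.

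Hence the main obstacle — and the heart of the proof — is to quantify how a bounded Monge-Amp\`ere measure on a long thin convex set controls the oscillation in the \emph{short} direction. The argument I expect to run: consider the sublevel set $\Omega_h$, rescale $x_1 \mapsto x_1/h$, $x_2 \mapsto x_2$ to make it of comparable size in both directions; under $x \mapsto (x_1/h, x_2)$ the Monge-Amp\`ere measure scales by $h$, so the rescaled $\tilde u$ has $\mu[\tilde u] \leq \Lambda h \to 0$. But $\tilde u$ still satisfies $\tilde u(x_1, 0) \geq |x_1|$ (the growth in the rescaled coordinate), $\tilde u(0) = 0$, $\tilde u \leq h \to 0$ on a convex set of size $\sim 1$, with $\mu[\tilde u] \to 0$. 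Now a compactness/stability argument for the Monge-Amp\`ere operator: a sequence of convex functions with $\mu \to 0$, uniformly bounded, must subconverge to an affine function on the interior; but an affine function $\ell$ with $\ell(0) = 0$ and $\ell(x_1,0) \geq |x_1|$ is impossible (an affine function cannot dominate $|x_1|$ through the origin). This contradiction finishes the proof. The delicate points to be careful about are: controlling $\Omega_h$ from below in the $x_2$-direction uniformly in $h$ (so the rescaled domains do not degenerate), ensuring the rescaled functions stay in a compact family (uniform Lipschitz bound on a fixed interior subdomain, which follows from convexity plus the uniform sup bound), and invoking the weak continuity of $\mu[\cdot]$ stated in \eqref{e203} to pass $\mu[\tilde u] \to 0$ to the limit and conclude the limit is affine. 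I would present this as: pick $h_k \to 0$, rescale, extract a locally uniform limit $\tilde u_\infty$, show $\mu[\tilde u_\infty] = 0$ hence $\tilde u_\infty$ is affine on the interior of the limit domain, and contradict $\tilde u_\infty(x_1,0) \geq |x_1|$, $\tilde u_\infty(0) = 0$.
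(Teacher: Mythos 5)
Your proposal is by contradiction plus a blow-up along sublevel sets, which is the right shape, but as written it has several genuine gaps, and one of them (the rescaling) is fatal because the two things you want from the rescaling pull in opposite directions.

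\textbf{The rescaling is inconsistent.} You rescale $x_1 \mapsto x_1/h$, keeping $x_2$ and the vertical variable fixed, and claim both that $\mu[\tilde u] \to 0$ and that $\tilde u(y_1,0) \geq |y_1|$ persists. These cannot both hold. With $\tilde u(y_1,y_2) = u(hy_1,y_2)$ (no vertical rescaling) one gets $\det D^2\tilde u = h^2\det D^2 u \leq h^2\Lambda \to 0$ as a density, but the growth degenerates: $\tilde u(y_1,0) = u(hy_1,0) \geq h|y_1| \to 0$, so the limit only has to satisfy $\tilde u_\infty(y_1,0) \geq 0$ — no contradiction with affineness. If instead you also rescale vertically, $\tilde u(y) = h^{-1}u(hy_1,y_2)$, then the growth $\tilde u(y_1,0) \geq |y_1|$ is preserved, but now $\det D^2\tilde u = \det D^2 u \leq \Lambda$ — the density does \emph{not} go to zero. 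There is no choice of this anisotropic scaling that makes $\mu[\tilde u_h] \to 0$ while keeping the linear growth, so the compactness-plus-stability conclusion ``limit is affine, contradiction'' never launches.

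\textbf{The lower bound on the $x_2$-extent is asserted without the estimate that makes it true.} You claim $b \geq c > 0$ because ``the restriction of $u$ to the $x_2$-axis is convex, vanishes at $0$, and stays below $h$, so it cannot grow too fast.'' That is backwards: if $u(0,x_2) > 0$ for every $x_2 \neq 0$, then $\{x_2 : u(0,x_2) < h\}$ shrinks to $\{0\}$ as $h \to 0$, and nothing in convexity prevents that. The lower bound $b_h \gtrsim 1$ is precisely where the hypothesis $\mu[u] \leq \Lambda$ enters, through an Aleksandrov-type estimate on the normalized sublevel set: normalization forces $\int_{\Omega_h}\det D^2 u \gtrsim h^2/\diam(\Omega_h)$, and with $\Omega_h \subset \{|x_1| < h\}$ this yields $|\Omega_h| \gtrsim h$, hence $b_h \gtrsim 1$, hence $\{u = 0\}$ contains a nondegenerate segment. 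Omitting this step makes the proposal circular.

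\textbf{Even granting $\mu[\tilde u_\infty] = 0$, the conclusion ``$\tilde u_\infty$ is affine'' is false in $\mathbb{R}^2$.} A convex function with vanishing Monge--Amp\`ere measure need only be developable, not affine. The function $\tilde u_\infty(y) = |y_1|$ has $\mu[|y_1|] = 0$ (its normal image is contained in the segment $[-1,1]\times\{0\}$, a null set), satisfies $\tilde u_\infty(0) = 0$ and $\tilde u_\infty(y_1,0) \geq |y_1|$, and is exactly the kind of function the blow-up is trying to rule out. So your final contradiction step fails regardless of the rescaling problem. The actual argument must exploit the hypothesis $u > 0$ on $\partial\Omega$, which forces the contact segment $K = \{u = 0\}$ to be compactly contained in $\Omega$ with an interior endpoint, and then run a second localization near that endpoint to show the crease along $K$ forces the normal image of small neighbourhoods to have volume incompatible with $\mu[u] \leq \Lambda$; this boundary condition does not survive your blow-up (it disappears at infinity), which is why $|y_1|$ slips through.

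In short: the overall strategy (sublevel sets, eccentricity in two dimensions, blow-up) is the right neighbourhood, but the scaling is self-defeating, the key lower bound on $b_h$ is unproved where it is needed most, and the terminal contradiction rests on a false implication about degenerate Monge--Amp\`ere measures.
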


\subsection{Strict convexity I}

First we rule out the possibility that $\mathcal{G}_0$ contains a line segment with one endpoint in the interior of $\Omega$.

\begin{lemma}\label{l402}
 $\mathcal{C}$ contains no extreme points in the interior of $\Omega$. 
\end{lemma}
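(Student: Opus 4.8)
The plan is to argue by contradiction: suppose $\mathcal{C}$ has an extreme point $x_0$ in the interior of $\Omega$, and derive a contradiction with Lemma \ref{l401}. Since $x_0$ is an extreme point of the bounded convex set $\mathcal{C}\subset\mathbb{R}^2$, there is a supporting line $P$ of $\mathcal{C}$ meeting $\partial\mathcal{C}$ only at $x_0$. After an affine change of coordinates I may assume $x_0=0$, that the supporting line $P=\{x_1=0\}$, that $\mathcal{C}\subset\{x_1\le 0\}$, that $\ell\equiv 0$, and that $u_0\ge 0$ near $x_0$ (subtracting the tangent function $\ell$). The contact set $\mathcal{C}$ touches the $x_1$-axis only at the origin, so for the slice $\{x_2=0\}$ there is, near the origin, a direction along the positive $x_1$-axis where $u_0$ leaves the contact set immediately. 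The goal is to show that on a small neighborhood $\Omega'$ of the origin, after possibly a further linear normalization, the restriction of $u_0$ satisfies the hypotheses of Lemma \ref{l401} with $\mu[u_0]$ bounded, contradicting that lemma.

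The key steps, in order, would be: \textbf{(1)} Normalize so that $x_0=0\in\Omega$, $\ell=0$, $u_0\ge 0$, and the contact segment through $x_0$ lies along a ray; by the extreme point property choose coordinates so $\mathcal{C}\subset\{x_1\le0\}$ with $\mathcal{C}\cap\{x_1=0\}=\{0\}$. \textbf{(2)} Pick a small ball $B_r(0)\Subset\Omega$ and observe that on the half-line $\{(t,0):t>0\}\cap B_r$ we have $u_0>0$ by the choice of $x_0$ as an extreme point (the point $(t,0)$ for $t>0$ is not in $\mathcal{C}$); moreover since $u_0$ is convex, nonnegative, vanishing at $0$ with a line segment in its graph, one gets a one-sided linear lower bound, so after rescaling the $x_1$ variable we can arrange $u_0(x_1,0)\ge |x_1|$ for $x_1$ in a suitable sub-interval. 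The subtlety is that $u_0(x_1,0)$ may vanish for $x_1<0$ (that is where $\mathcal{C}$ lives), so one must choose the domain $\Omega'$ so that its intersection with $\{x_2=0\}$ only sees $x_1\ge 0$ where $u_0$ is already positive, or slightly tilt: replace $\ell$ by $\ell+\epsilon x_1$ for small $\epsilon>0$, which forces the new contact set to shrink to $\{0\}$ on the axis while keeping convexity and nonnegativity on a one-sided neighborhood. \textbf{(3)} Take $\Omega'$ to be a sublevel set $\{x\in\Omega: u_0(x)<c\}$ intersected with $B_r$, or the convex hull of a small segment near $0$; arrange $u_0>0$ on $\partial\Omega'$, $u_0(0)=0$, $u_0(x_1,0)\ge |x_1|$ on $\Omega'\cap\{x_2=0\}$. \textbf{(4)} Invoke the determinant estimates: by the Remark after Lemma \ref{l202} together with Lemma \ref{l302} (and Lemma \ref{l301}, noting $g_i<0$ so the sign is favorable), the maximizer $u_0$ has $\det D^2u_0=\mu_r[u_0]\le C$ locally in $\Omega$; since the singular part of $\mu[u_0]$ is supported where $u_0$ is not strictly convex, one checks $\mu[u_0]$ is in fact a bounded function on $\Omega'$ — this is where one uses that in dimension two, after the rotation $Z=TX$ of Section 2.4, the rotated problem still enjoys the bound in Lemma \ref{l204}, so the Monge-Amp\`ere mass does not concentrate. \textbf{(5)} Apply Lemma \ref{l401} to $u=u_0|_{\Omega'}$ to conclude $\mu[u_0]$ cannot be a bounded function, a contradiction. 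Hence no interior extreme point exists.

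The main obstacle I anticipate is Step (4): verifying that $\mu[u_0]$ is genuinely a \emph{bounded function} (not merely a measure with bounded regular part) on the normalized domain. The issue is that line segments in $\mathcal{G}_0$ could in principle carry singular Monge-Amp\`ere mass concentrated along them, and near the extreme point $x_0$ one must rule this out. This is exactly the point where treating $u_0$ as a graph in $\mathbb{R}^{n+1}$ and rotating (Section 2.4) pays off: the rotated function $\hat v$ solves the transformed equation \eqref{e221}, and Lemma \ref{l204} gives a determinant bound for $\hat v$ on the relevant sublevel set $\hat\Omega_{s,h}$; transporting this back through the Legendre-type duality of Section 2.3, together with Lemma \ref{l203}, controls the measure $\mu[u_0]$ near the segment and shows no singular concentration occurs in two dimensions. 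A secondary technical point is the $\epsilon x_1$ tilt in Step (2): one must check that perturbing the tangent plane keeps us in a regime where the a priori estimates apply (the estimates depend on $\sup f$ but not $\inf f$, and on the local geometry, all of which are stable under such a tilt), and that the new contact set really does collapse to the single point. Once the boundedness of $\mu[u_0]$ on $\Omega'$ is in hand, the contradiction with Lemma \ref{l401} is immediate.
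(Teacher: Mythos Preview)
Your overall strategy---reduce to Lemma~\ref{l401} and use the determinant bound to get a contradiction---is the right one, but there is a genuine gap at Step~(2): you cannot arrange the hypothesis $u_0(x_1,0)\ge |x_1|$ for $u_0$ (or any tilt of it) in the original coordinates. Along the direction pointing into $\mathcal{C}$, the function $u_0-\ell$ vanishes identically on a segment, so it certainly does not grow linearly there; and in the direction leaving $\mathcal{C}$ you only know $u_0>0$, with no a priori linear lower bound (the growth could be quadratic or slower). Your tilt $u_0-\varepsilon x_1$ then fails to be nonnegative on the side $x_1>0$ near the origin, so Lemma~\ref{l401} does not apply. Restricting $\Omega'$ to ``only see $x_1\ge 0$'' is also impossible, since Lemma~\ref{l401} requires the origin to be an interior point of $\Omega'$.

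What the paper actually does is a blow-up followed by a rotation in $\mathbb{R}^3$, and this step is essential rather than cosmetic. One takes the tilted sublevel sets $\Omega_\varepsilon=\{u_0<-\varepsilon x_1+\varepsilon\}$, normalizes them, and rescales $u_0$ by $1/\varepsilon$. In the limit $\varepsilon\to 0$ the graph $\widetilde{\mathcal G}_0$ contains \emph{two} segments meeting at the origin: the original horizontal segment coming from $\mathcal{C}$, and a vertical segment coming from the pinching of the sublevel set at the extreme point (see \eqref{e405}--\eqref{e406}). It is precisely this V-shape in the graph that, after a rotation of $\mathbb{R}^3$, becomes the graph of a function $v$ with $v(z_1,0)=\tfrac12|z_1|$---and \emph{that} is where the linear growth required by Lemma~\ref{l401} comes from. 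The rotated approximants $v_k$ satisfy the transformed equation \eqref{e218}, and Lemma~\ref{l204} (not Lemma~\ref{l202}) gives the uniform determinant bound for them; weak continuity then gives $\mu[v]\le C$, contradicting Lemma~\ref{l401}. Your Step~(4) invokes the rotation and Lemma~\ref{l204} only to control the measure, but the rotation is doing double duty: it is also what manufactures the non-differentiable profile $|z_1|$ needed to feed into Lemma~\ref{l401}. Without the blow-up/rotation, the argument does not close.
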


\begin{proof}
The proof is by contradiction arguments as in \cite{TW00,Z1}.  
Without loss of generality, we may assume that $\ell(x)=0$, the origin is an extreme point of $\mathcal{C}$ and the segment $\{(x_1,0)\,:\,0\leq x_1\leq 1\}\subset\mathcal{C}$. From the approximation argument, we can choose a sequence of functions $\{u_k\}$ converging to $u_0$ such that $u_k$ is a solution of \eqref{e307}. 
Let $\mathcal{G}_k$ be the graph of $u_k$. Then $\mathcal{G}_k$ converges in the Hausdorff distance to $\mathcal{G}_0$.

For $\varepsilon>0$ small enough, let
  \begin{equation}\label{e402}
   \ell_\varepsilon=-\varepsilon x_1+\varepsilon,\quad\mbox{and } \Omega_\varepsilon=\{u<\ell_\varepsilon\}.
  \end{equation}
Let $T_\varepsilon$ be a coordinates transformation that normalizes the domain $\Omega_\varepsilon$. 
Define
  \begin{equation}\label{e403}
   u_\varepsilon(x)=\frac{1}{\varepsilon}u(T_\varepsilon^{-1}(x)),\quad u_{k,\varepsilon}=\frac{1}{\varepsilon}u_k(T_\varepsilon^{-1}(x)),\quad x\in\tilde\Omega_\varepsilon,
  \end{equation}
where $\tilde\Omega_\varepsilon=T_\varepsilon(\Omega_\varepsilon)$ is normalized. After this transformation we have the following observations:

(i) The equation $U^{ij}w_{ij}=f$ with $w=[\det\,D^2u]^{\alpha-1}$, $0\leq\alpha\leq\frac14$, will become
  \begin{equation}\label{e404}
   \tilde U^{ij}\tilde w_{ij}=\tilde f,
  \end{equation}
where $\tilde U^{ij}$ is the cofactor of $D^2\tilde u$, 
	\[\tilde w=[\det\,D^2\tilde u]^{\alpha-1},\quad \mbox{and } \tilde f=|T_\varepsilon|^{-2\alpha}\varepsilon^{1-2\alpha}f.\] 
In fact, since $T_\varepsilon$ normalizes $\Omega_\varepsilon$, $|T_\varepsilon|^{-1}\leq|\Omega_\varepsilon|\leq C$. Therefore, $\tilde f\to0$ as $\varepsilon\to0$.

(ii) Denote by $\mathcal{G}_\varepsilon$ and $\mathcal{G}_{k,\varepsilon}$ the graphs of $u_\varepsilon$ and $u_{k,\varepsilon}$, respectively. 
Taking $k\to\infty$, it is clear that $u_{k,\varepsilon}\to u_\varepsilon$ and $\mathcal{G}_{k,\varepsilon}$ converges in the Hausdorff distance to $\mathcal{G}_\varepsilon$. 
Then taking $\varepsilon\to0$, we have that the domain $\tilde\Omega_\varepsilon$ sub-converges to a normalized domain $\tilde\Omega$ and $u_\varepsilon$ sub-converges to a convex function $\tilde u$ defined in $\tilde\Omega$. 
We also have $\mathcal{G}_\varepsilon$ sub-converges in the Hausdorff distance to a convex surface $\mathcal{\tilde G}_0\in\mathbb{R}^3$. 

(iii) By a rotation of coordinates, the convex surface $\mathcal{\tilde G}_0$ satisfies
	\begin{equation}\label{e405}
		\mathcal{\tilde G}_0\subset\{y_1\geq0\}\cap\{y_3\geq0\}
	\end{equation}
and $\mathcal{\tilde G}_0$ contains two segments
	\begin{equation}\label{e406}
		\{(0,0,y_3)\,:\,0\leq y_3\leq 3\},\quad \{(y_1,0,0)\,:\,0\leq y_1\leq 1\}.
	\end{equation}

Hence, by (i)--(iii) we can assume that there is a sequence of solutions $\tilde u_k$ of
	\begin{equation}\label{e407}
		U^{ij}w_{ij}=\varepsilon_k f\quad\mbox{in }\tilde\Omega_k,
	\end{equation}
where $w=[\det\,D^2u]^{\alpha-1}$, and $\varepsilon_k\to0$ such that the normalized domain $\tilde\Omega_k$ converges to $\tilde\Omega$, $\tilde u_k$ converges to $\tilde u$ and the graph of $\tilde u_k$, denoted by $\mathcal{\tilde G}_k$ converges in the Hausdorff distance to $\mathcal{\tilde G}_0$. 

Note that in $y$-coordinates, $\mathcal{\tilde G}_0$ is not a graph of a function near the origin. By adding some linear function to $\tilde u_k$ and $\tilde u$ and making a rotation of coordinates in $\mathbb{R}^3$, i.e., $z_i=R_{ij}y_j$, where $(R_{ij})$ is a $3\times3$ rotation matrix, $\mathcal{\tilde G}_k, \mathcal{\tilde G}_0$ can be represented by $z_3=v_k(z_1,z_2), z_3=v(z_1,z_2)$, respectively \cite{Z1}. 
Moreover, $v_k$ is a solution of the equation given in \S2.4 near the origin, $v$ satisfies
	\begin{equation}\label{e408}
		v\geq\frac12|z_1|,\quad\mbox{and}\quad v(z_1,0)=\frac12|z_1|.
	\end{equation}

As we know that $\mathcal{\tilde G}_k$ converges in the Hausdorff distance to $\mathcal{\tilde G}_0$, in the new coordinates, $v_k$ converges locally uniformly to $v$. Let $\mathcal{\tilde C}=\{(z_1,z_2),\:\,v(z_1,z_2)=0\}$, and
	\begin{equation}\label{e409}
		L=\{(z_1,z_2,0)\,:\,(z_1,z_2)\in\mathcal{\tilde C}\}
	\end{equation}
in $z$-coordinates. $L$ could be a single point (Case I) or a segment on $z_2$-axis (Case II). 

\emph{Case I}:
In this case, $v$ is strictly convex at $(0,0)$. The strict convexity implies that $Dv$ is bounded on the sub-level set $S_{h,v}(0)$ for small $h>0$. Hence, by locally uniform convergence, $Dv_k$ are uniformly bounded on $S_{h/2,v_k}(0)$. By Lemma \ref{l204}, we have the determinant estimate
	\begin{equation}\label{e410}
		\det\,D^2v_k\leq C
	\end{equation}
near the origin, where the constant $C$ is uniform with respect to $k$. By the weak continuity of Monge-Amp\`ere measure, $\mu[v]\leq C$ near the origin. The contradiction
follows by Lemma 4.1.

\emph{Case II}:
In this case, $L$ is a segment, we may also assume that $0$ is an end point of $L$, i.e.,
	\[\mathcal{\tilde C}=\{(0, z_2)\,:\,-1\leq z_2\leq 0\}.\]
Define the linear function
	\begin{equation}\label{e411}
		\ell_\varepsilon(z)=\delta_\varepsilon z_2+\varepsilon
	\end{equation}
and $\omega_\varepsilon=\{z\,:\,v(z)\leq\ell_\varepsilon\}$, where $\delta_\varepsilon, \varepsilon$ are chosen such that $\varepsilon\delta_\varepsilon^{-1}\to0$ as $\varepsilon\to0$. 
	By taking the similar transformations and normalizations as in \eqref{e402}, \eqref{e403} with respect to $z_2$ direction, one can reduce Case II to Case I. The proof is then finished.
	
\end{proof}

\subsection{Strict convexity II}

Next, we rule out the possibility of case (b) that all extreme points of $\mathcal{C}$ lie on the boundary $\partial\Omega$. Recall the definition of $\mathcal{C}$ in \eqref{e401}, and define the set $T:=\{x\in\Omega\,:\,u(x)=\psi(x)\}$, where $\psi$ is the obstacle. 

\begin{lemma}\label{l403}
 Let $u_0\in\mathcal{S}[\varphi,\psi]$ be the maximizer. The obstacle $\psi$ is a convex function in $\Omega$ satisfying $\psi<\varphi$ on $\partial\Omega$. If all extreme points of $\mathcal{C}$ lie on the boundary $\partial\Omega$, then $\dist(\mathcal{\overline C},\overline{T})>c_0$ for some positive constant $c_0$. 
\end{lemma}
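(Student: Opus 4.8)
The plan is to argue by contradiction. Suppose all extreme points of $\mathcal{C}$ lie on $\partial\Omega$, yet $\dist(\overline{\mathcal{C}},\overline{T})=0$. Since $\mathcal{C}$ is convex and bounded with all extreme points on $\partial\Omega$, $\mathcal{C}$ is contained in the closed convex hull of $\mathcal{C}\cap\partial\Omega$; in particular, every interior point of $\mathcal{C}$ lies on a chord of $\mathcal{C}$ joining two boundary points of $\Omega$. First I would extract, from $\dist(\overline{\mathcal{C}},\overline{T})=0$, a pair of sequences $x_j\in\mathcal{C}$, $y_j\in T$ with $|x_j-y_j|\to 0$, and pass to limits to obtain a common point $p\in\overline{\mathcal{C}}\cap\overline{T}$. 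The key tension is this: on $\mathcal{C}$ the maximizer $u_0$ agrees with the affine function $\ell$, so its graph contains the line segment of $\mathcal{C}$; on $T$ the maximizer agrees with the obstacle $\psi$, which is uniformly convex near $p$ (or at least strictly below $\varphi$ and convex). Since $u_0\ge\psi$ everywhere and $u_0=\ell$ on $\mathcal{C}$, we must have $\ell\ge\psi$ on $\mathcal{C}$, and equality $\ell(p)=u_0(p)=\psi(p)$ at the limit point forces $p$ to be a point where the affine function $\ell$ touches the convex function $\psi$ from above.

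The next step is to use this touching configuration together with the structure of $\mathcal{C}$ established in Lemma \ref{l402}: since $\mathcal{C}$ has no extreme point in the interior of $\Omega$, and $\ell$ touches $\psi$ from above at $p$, I would derive that $p$ cannot be an extreme point of $\mathcal{C}$, so $p$ lies in the relative interior of a chord $[a,b]\subset\mathcal{C}$ with $a,b\in\partial\Omega$ (or $p$ itself is on $\partial\Omega$, a case I would treat separately using $\psi<\varphi$ on $\partial\Omega$ to get a contradiction directly, since on $\partial\Omega$ we have $u_0=\varphi>\psi$, so $p\notin\overline T$ if $p\in\partial\Omega$ — this already handles the boundary case). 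Thus $p$ is interior to $\Omega$ and interior to a segment in $\mathcal{G}_0$. On that whole segment $u_0=\ell\ge\psi$, with equality at $p$; because $\psi$ is convex and $\ell$ is affine, $\{\psi=\ell\}$ along the segment is itself convex (an interval), and the contact propagates — but a uniformly convex $\psi$ can agree with an affine function on at most a single point, contradiction. If $\psi$ is only assumed convex (not uniformly), I would instead push the contact interval to an endpoint and use that $u_0=\ell>\psi$ there by $\psi<\varphi$ on $\partial\Omega$, since the segment terminates on $\partial\Omega$, again a contradiction.

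The main obstacle I anticipate is making the limit argument uniform: $\dist(\overline{\mathcal{C}},\overline{T})=0$ only gives a single limiting contact point $p$, whereas to run the propagation argument I need a segment of $\mathcal{C}$ through $p$ on which I can compare $\ell$ and $\psi$ quantitatively. Handling this requires knowing that $\mathcal{C}$ has nonempty interior near $p$ (otherwise $\mathcal{C}$ is itself a segment with both endpoints on $\partial\Omega$, which I would dispatch using the endpoint argument above) and controlling the geometry of $\mathcal{C}$ via Lemma \ref{l402}. A secondary technical point is that $\overline{\mathcal{C}}$ and $\overline{T}$ could a priori touch along $\partial\Omega$; this is why the case $p\in\partial\Omega$ must be ruled out first, and it is ruled out cleanly by the hypothesis $\psi<\varphi$ on $\partial\Omega$ together with $u_0|_{\partial\Omega}=\varphi$, which forces $\overline T$ to stay a definite distance from $\partial\Omega$, hence from any boundary portion of $\overline{\mathcal{C}}$. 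Once the positive separation near the boundary is secured and the interior touching point is shown impossible, the existence of the uniform $c_0>0$ follows by a standard compactness argument, since both $\overline{\mathcal{C}}$ and $\overline T$ are compact.
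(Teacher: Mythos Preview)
Your argument is correct and is precisely the convexity argument the paper has in mind: the paper's own proof is the single sentence ``This follows easily from the convexity,'' and your contradiction---rule out $p\in\partial\Omega$ via $\psi<\varphi=u_0$ there, then on an interior $p$ use that $p$ lies on a chord $[a,b]\subset\overline{\mathcal C}$ with $a,b\in\partial\Omega$, so the convex function $\psi-\ell\le 0$ on $[a,b]$ with an interior zero forces $\psi\equiv\ell$ on $[a,b]$, contradicting $\psi(a)<\varphi(a)=\ell(a)$---is exactly the intended fleshing-out. The worries you flag (nonempty interior of $\mathcal C$, uniform vs.\ merely convex $\psi$) do not actually arise: any interior point of a compact convex set whose extreme points all lie on $\partial\Omega$ automatically sits on such a chord, and only convexity of $\psi$ is needed.
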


\begin{proof}
This follows easily from the convexity.
\end{proof}

\begin{lemma}\label{l404}
 Assume that $\varphi$ is uniformly convex in a neighborhood of $\Omega$. then $\mathcal{G}_0$ contains no line segments with both endpoints on $\partial\mathcal{G}_0$.
\end{lemma}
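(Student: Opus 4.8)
The plan is to argue by contradiction, in the spirit of Lemma \ref{l402}, but now exploiting the uniform convexity of the boundary data $\varphi$ to force a contradiction with Lemma \ref{l401}. So suppose $\mathcal{G}_0$ contains a line segment $\ell$ whose two endpoints $P, Q$ both lie on $\partial\mathcal{G}_0$, i.e.\ above $\partial\Omega$. By Lemma \ref{l403} the contact set $\mathcal{C}$ stays a definite distance $c_0>0$ away from the obstacle contact set $\overline T$; consequently, in a neighborhood of the segment, $u_0$ is strictly above $\psi$, and the approximation machinery of Lemma \ref{l301} (applied on a subdomain, via the last Remark of \S3.1) together with Lemma \ref{l302} furnishes a sequence of smooth solutions $u_k$ of \eqref{e307} converging to $u_0$ in this neighborhood, with no obstacle term active there.

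Next I would set up the blow-up. Let $\ell(x)$ be a tangent plane of $u_0$ along the segment; subtract it so that $u_0\geq 0$ with $u_0=0$ exactly on $\mathcal{C}\supset$ the segment $PQ$ (projected to $\Omega$). For small $\varepsilon>0$ pick a linear function $\ell_\varepsilon$ of the form $\ell_\varepsilon=\varepsilon\,\tau\cdot x + c_\varepsilon$, where $\tau$ is a unit vector \emph{transverse} to the segment, chosen so that the section $\Omega_\varepsilon=\{u_0<\ell_\varepsilon\}$ is a thin neighborhood of the full segment from $P$ to $Q$. Normalize $\Omega_\varepsilon$ by an affine transformation $T_\varepsilon$ and rescale $u_{k,\varepsilon}=\tfrac1\varepsilon u_k(T_\varepsilon^{-1}\cdot)$, exactly as in \eqref{e402}--\eqref{e404}. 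As in observation (i) of Lemma \ref{l402}, the right-hand side of the rescaled equation tends to $0$; letting first $k\to\infty$ and then $\varepsilon\to0$, the normalized domains $\tilde\Omega_\varepsilon$ subconverge to a normalized convex domain $\tilde\Omega$ and $u_{k,\varepsilon}$ subconverge to a limit convex function $\tilde u$ on $\tilde\Omega$, with graph $\widetilde{\mathcal G}_0$. The key structural point now, and the place where uniform convexity of $\varphi$ enters, is that the endpoints $P,Q$ sit over $\partial\Omega$ where $u_0=\varphi$ is uniformly convex, so the rescaled graph near the images of $P$ and $Q$ cannot itself flatten out into a segment: in the limit, $\widetilde{\mathcal G}_0$ contains the (rescaled) segment $PQ$ but is strictly convex at each of its endpoints. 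After a rotation in $\mathbb{R}^3$ so that this segment becomes $\{(y_1,0,0): 0\le y_1\le 1\}$ and $\widetilde{\mathcal G}_0\subset\{y_3\ge 0\}$, the limit is a graph $z_3=v(z_1,z_2)$ near an endpoint with $v\geq 0$, $v(z_1,0)\geq |z_1|$ on one side and $v$ strictly convex there — precisely the configuration of Lemma \ref{l401}. Using the determinant estimate of Lemma \ref{l204} (valid since $0\le\alpha\le\frac14$ and, via strict convexity at the endpoint, $Dv_k$ is locally bounded after passing to the rotated coordinates), we get $\mu[v]\le C$ near that endpoint, and Lemma \ref{l401} yields the contradiction.

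The main obstacle is the second step: showing that uniform convexity of $\varphi$ actually survives the blow-up at an endpoint of the segment, i.e.\ that $\widetilde{\mathcal G}_0$ is genuinely strictly convex there rather than containing yet another segment emanating along $\partial\Omega$. This requires a careful choice of the cutting direction $\tau$ and of the rate at which $\varepsilon\to 0$ versus the scale of $\Omega_\varepsilon$ near $P$ and $Q$, so that the normalization $T_\varepsilon$ does not itself crush the transverse curvature contributed by $\varphi$; one wants the section $\Omega_\varepsilon$ to shrink to the segment at a rate comparable in all directions, which is possible because $u_0-\ell$ vanishes to first order transverse to the segment but, near $P$ and $Q$, is bounded below by the uniformly convex $\varphi-\ell$. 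Once the limiting picture is pinned down, reducing a "segment on the $z_2$-axis" subcase to the strictly-convex subcase is handled exactly as in Case II of Lemma \ref{l402}, by one further transformation and normalization in the $z_2$-direction. Combining Lemma \ref{l402} and Lemma \ref{l404}, $\mathcal{G}_0$ contains no line segments at all, hence $u_0$ is strictly convex, which is what is needed for Theorem \ref{t101}.
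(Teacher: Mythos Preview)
Your blow-up strategy is a genuine departure from the paper's argument, and the obstacle you flag is real and unresolved. The paper does not attempt a direct blow-up at the endpoints; instead it passes to the \emph{Legendre transform} $u_0^*$. The segment $\ell\subset\{u_0=0\}$ of length $2$ dualizes to the linear lower bound $u_0^*(0,y_2)\ge |y_2|$, while the uniform convexity of $\varphi$ gives the quadratic \emph{upper} bound $u_0(x)\le \tfrac{C}{2}x_1^2$ near $\ell$, which dualizes to $u_0^*(y)\ge \tfrac{1}{2C}y_1^2$. Uniform convexity of $\varphi$ also guarantees $0\notin D\varphi(\partial\Omega)$, so $0$ lies in the \emph{interior} of $\Omega^*=D\varphi(\Omega)$. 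Thus on the dual side one has exactly the configuration of Lemma~\ref{l401} at an interior point, and the contradiction comes from the dual determinant estimate Lemma~\ref{l203}, whose constant depends on $\inf f$ but not on $\sup f$. That last point dictates the approximation: the paper uses solutions of $U^{ij}w_{ij}=f+\beta_m\chi_{D_m}$ with $\beta_m\to+\infty$ near $\partial\Omega'$, the large positive term forcing the normal-mapping inclusion \eqref{e415} so that the Legendre transforms $u_m^*$ are defined on a fixed neighborhood of $0$, while remaining harmless for Lemma~\ref{l203}.

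In your scheme, by contrast, the uniform convexity of $\varphi$ contributes only a quadratic bound at the endpoint, and nothing in the rescaling $u\mapsto \varepsilon^{-1}u(T_\varepsilon^{-1}\cdot)$ turns this into the linear kink $v(z_1,0)\ge |z_1|$ required by Lemma~\ref{l401}. In Lemma~\ref{l402} the vertical segment in \eqref{e406} appears because the interior extreme point has $u_0$ rising strictly on the far side of $\mathcal{C}$, so after normalization the graph past the extreme point becomes vertical; at a boundary endpoint there is no such ``other side'' inside $\Omega$, and the extension by $\varphi$ on $B\setminus\Omega$ rises only quadratically. Your approximation via Lemmas~\ref{l301}--\ref{l302} also lacks the normal-mapping control \eqref{e415}, so even if you switched to the dual picture you would not know that $u_k^*$ is defined near $0\in\Omega^*$. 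The Legendre transform is precisely what converts the boundary-segment configuration into an interior-kink configuration; your proposal does not supply a substitute for this step.
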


\begin{proof}
By Lemma \ref{l403}, we can restrict our discussion on a sub-domain $\Omega'\subset\Omega$ satisfying $\dist(\Omega',T)>c_0$ and $\{\mbox{extreme points of }\mathcal{C}\}\subset\partial\Omega'\cap\partial\Omega$. 
Let $u_0$ be the maximizer of $J_\alpha$ and
  \begin{equation}\label{e412}
   \bar S[u_0,\Omega']:=\{v\in C(\overline\Omega')\,:\,v \mbox{ convex }, v_{\partial\Omega'}=u_0, N_v(\Omega')\subset N_{u_0}(\overline\Omega')\}.
  \end{equation}
Note that since $\dist(\Omega',T)>c_0$, when restricting on $\Omega'$, $u_0$ is naturally a maximizer of $J_\alpha$ over $\bar S[u_0,\Omega']$ without obstacle. 
Therefore, we can apply a similar local approximation in \cite{TW10} as follows:
\begin{quote}
\emph{Claim:} There exists a sequence of smooth, uniformly convex solutions $u_m\in W^{4,p}(\Omega')$ $(\forall p<\infty)$ of
  \begin{equation}\label{e413}
   U^{ij}w_{ij}=f+\beta_m\chi_{D_m}\quad\mbox{in }\Omega'
  \end{equation}
such that
  \begin{equation}\label{e414}
   |u_m-u|\to0\quad\mbox{uniformly in }\Omega',
  \end{equation}
where $D_m=\{x\in\Omega'\,:\,\dist(x,\partial\Omega')<2^{-m}\}$, $\chi$ is the characteristic function, and $\beta_m>0$ is a constant.
Furthermore, we can choose $\beta_m$ sufficiently large ($\beta_m\to\infty$ as $m\to\infty$) such that for any compact, proper subset $K\subset N_{u_0}(\Omega')$,
  \begin{equation}\label{e415}
   K\subset N_{u_m}(\Omega')
  \end{equation}
provided $m$ is sufficently large, where $N_u$ is the normal mapping introduced in Section 2. 
\end{quote}
The proof of the claim is contained in \cite{TW10} for the case $\alpha=\frac{1}{n+2}$, see also \cite{Z1} for the case $\alpha=0$. The idea is similar to the proof of Lemma \ref{l302}. But instead of considering the second boundary value problem with inhomogeneous term \eqref{e314}, we consider a weighted one
	\begin{equation}\label{e416}
		f_{m,j}=\left\{\begin{array}{ll}
			f+\beta_m\chi_{D_m} & \mbox{in }\Omega' \\
			H_j'(u-u_0) & \mbox{in }B_R\setminus\Omega'
		\end{array}\right.
	\end{equation}
where $H_j(t)=H(4^jt)$ given by \eqref{e310}, $B_R$ is a large ball enclosing $\Omega'$. By Lemma \ref{l303}, there is a solution $u_{m,j}$ satisfying
	\begin{equation}\label{e417}
		|u_{m,j}-u_0|\leq 4^{-j},\quad x\in B_R\setminus\Omega'.
	\end{equation}
By the convexity, $u_{m,j}$ sub-converges to a convex function $u_m$ as $j\to\infty$ and $u_m=u_0$ in $B_R\setminus\Omega'$. Note that $u_m\in\mathcal{S}[u_0,\Omega']$ when restricted in $\Omega'$, therefore, $u_m$ converges to a convex function $u_\infty$ in $\mathcal{S}[u_0,\Omega']$ as $m\to\infty$. Similarly, one can show that $u_\infty$ is the maximizer of $J_\alpha$ over the set $\mathcal{S}[u_0,\Omega']$. By the uniqueness of maximizer, we have $u_\infty=u_0$ and obtain the claim. See \cite{TW10,Z1} for more details.

Now, suppose that $\ell$ is a line segment in $\mathcal{G}_0$ with both end points on $\partial\mathcal{G}_0$. By substracting a linear function, we assume that $u_0\geq0$ and $\ell$ lies in $\{x_3=0\}$. From the definition of $\Omega'$, we also have $\ell\subset\Omega'$ with both end points on $\partial\Omega'\cap\partial\Omega$. By a traslation and a dilation of the coordiantes, we may assume furthermore that
  \begin{equation}\label{e418}
   \ell=\{(0,x_2,0)\,:\,-1\leq x_2\leq 1\}
  \end{equation}
with the endpoints $(0,\pm1)\in\partial\Omega'\cap\partial\Omega$.

Since $\varphi$ is smooth, uniformly convex in a neighborhood of $\Omega$ and $u_0=\varphi$ on $\partial\Omega$, it follows 
  \begin{equation}\label{e419}
   u_0(x)=\varphi(x)\leq\frac{C}{2}|x_1|^2,\quad x\in\partial\Omega'\cap\partial\Omega.
  \end{equation}
By the convexity of $u_0$,	
  \begin{equation}\label{e420}
   u_0(x)\leq\frac{C}{2}|x_1|^2,\quad x\in\Omega'.
  \end{equation}

Consider the Legendre transform $u_0^*$ of $u_0$ in $\Omega^*=D\varphi(\Omega)$, given by
  \begin{equation}\label{e421}
   u_0^*(y)=\sup\{x\cdot y-u_0(x),\ \ x\in\Omega\},\quad y\in\Omega^*.
  \end{equation}
Since both endpoints $(0,\pm1)\in\partial\Omega'\cap\partial\Omega$, by the uniform convexity of $\varphi$, $0\notin D\varphi(\partial\Omega)$. Hence $0\in\Omega^*$ is an interior point. By \eqref{e419}, \eqref{e420} we have
  \begin{eqnarray}
   u_0^*(0,y_2)\!\!&\geq&\!\!|y_2|, \label{e422} \\
   u_0^*(y)\!\!&\geq&\!\!\frac{1}{2C}y_1^2. \label{e423}
  \end{eqnarray}
Therefore, $\det\,D^2u_0^*$ is not bounded from above near the origin by Lemma \ref{l401}. 

But on the other hand, by the a priori estmiate in Lemma \ref{l203}, $\det\,D^2u_0^*$ must be bounded. Indeed, consider the Legendre transform $u_m^*$ of $u_m$. By the approximations \eqref{e414}, \eqref{e415}, and \eqref{e210}, $u_m^*$ satisfies the equation
  \begin{equation}\label{e424}
   U^{*ij}w_{ij}^*=-f_m(Du^*)\det\,D^2u^*\quad\mbox{in }\Omega^*_{\varepsilon_m},
  \end{equation}
where $f_m=f+\beta_m\chi_{D_m}$ and
  \[\Omega^*_{\varepsilon_m}=\{y\in\Omega^*\,:\,\dist(y,\partial\Omega^*)>\varepsilon_m\}\]
with $\varepsilon_m\to 0$ as $m\to\infty$. By the growth estimates \eqref{e422} and \eqref{e423}, $u_0^*$ is strictly convex at $0$, the set $\{u_0^*<h\}$ is strictly contained in $\Omega^*$ provided $h>0$ is small. Note that $u_m^*$ converges to $u_0^*$. By Lemma \ref{l203} we have the estimate
  \[\det\,D^2u_m^*\leq C_1\]
near the origin in $\Omega^*$. Note also that in Lemma \ref{l203}, the constant $C_1$ depends on $\inf f$ but not on $\sup f$. In other words, the large constant $\beta_m$ in \eqref{e413} does not affect the bound $C_1$. Therefore, sending $m\to\infty$,  we obtained	
  \[\det\,D^2u_0^*\leq C\]
near the origin. This is in contradiction with the assertion that $\det\,D^2u_0^*$ is not bounded from above near the origin. 
\end{proof}

\section{Regularity}

We can now give the proof of Theorem \ref{t101}, which is divided into two parts:

\subsection{$C^{1,\alpha}$ regularity}

Assume that $\psi$ is convex and satisfies $\psi<\varphi$ on $\partial\Omega$. Let $u$ be the maximizer of \eqref{e204} and $\mathcal{G}_u$ be the graph of $u$ over $\Omega$. From Section 4 we know $\mathcal{G}_u$ is strictly convex. The $C^{1,\alpha}$ estimate for strictly convex solutions of Monge-Amp\`ere equations was obtained by Caffarelli \cite{Caf91}. Here we adopt a similar argument from \cite{TWma}.

For an arbitrary point on $\mathcal{G}_u$, by choosing appropriate coordinates and a rotation in $\mathbb{R}^{n+1}$, we assume it is the origin and $\mathcal{G}_u\subset\{x_3\geq0\}$, and near the origin $\mathcal{G}_u$ is the graph of a strictly convex function $u$. 

\begin{lemma}\label{l501}
There exist positive constants $\alpha,\beta$, and $C$ such that
	\begin{equation}\label{e501}
		C^{-1}|x|^{1+\beta}\leq u(x)\leq C|x|^{1+\alpha}\quad \mbox{near the origin.}
	\end{equation}
\end{lemma}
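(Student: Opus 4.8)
The plan is to establish the two-sided estimate by combining the strict convexity of $\mathcal{G}_u$ (already proved in Section 4) with the a priori determinant bounds of Lemmas \ref{l202} and \ref{l203}, transported to the sub-level sets of $u$ at the origin. Fix a small $h>0$ and let $S_h=\{x:u(x)<h\}$ be the section of $u$ at the origin at height $h$; by strict convexity $S_h\Subset\Omega$ for $h$ small, and $S_h$ shrinks to the origin as $h\to0$. Let $T_h$ be an affine map normalizing $S_h$, so that $B_{c_n}\subset T_h(S_h)\subset B_1$ for a dimensional constant $c_n$, and set $\tilde u_h(x)=\tfrac1h u(T_h^{-1}x)$ on the normalized domain $\tilde S_h:=T_h(S_h)$; then $\tilde u_h=0$ on $\partial\tilde S_h$ in the sense that $\tilde u_h<1$ inside and $\tilde u_h$ vanishes at the center. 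One checks, exactly as in observation (i) of the proof of Lemma \ref{l402}, that $\tilde u_h$ solves an equation of the form $\tilde U^{ij}\tilde w_{ij}=\tilde f_h$ with $\tilde w=[\det D^2\tilde u_h]^{\alpha-1}$ and $\tilde f_h=|T_h|^{-2\alpha}h^{1-2\alpha}f$; since $|T_h|^{-1}\le|S_h|\to0$, the right-hand side $\tilde f_h$ stays bounded (indeed $\to0$) as $h\to0$. Here one uses the approximation of $u$ by smooth solutions (Lemma \ref{l302}) so that Lemmas \ref{l202}, \ref{l203} apply to $\tilde u_h$ up to passage to the limit via weak continuity of the Monge--Amp\`ere measure.

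Next I would apply Lemma \ref{l202} to $\tilde u_h$ to get $\det D^2\tilde u_h\le C$ in a fixed interior ball of $\tilde S_h$, and Lemma \ref{l203} applied to the Legendre transform $\tilde u_h^*$ (whose domain $N_{\tilde u_h}(\tilde S_h)$ is normalized because $\tilde u_h$ is normalized) to get the reverse bound $\det D^2\tilde u_h\ge c>0$ on the corresponding interior region. The constants are independent of $h$ because, after normalization, all the quantities on which $C$ depends — $\mathrm{dist}(\cdot,\partial\tilde S_h)$, $\sup|\tilde u_h|$, $\sup|D\tilde u_h|$, $\sup\tilde f_h$, $\inf\tilde f_h$ — are controlled uniformly (this is precisely why we normalize). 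Thus $c\le\det D^2\tilde u_h\le C$ on, say, $\tfrac12 B_{c_n}$, uniformly in $h$.

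Having pinched $\det D^2\tilde u_h$ between two positive constants on a fixed ball, I invoke Caffarelli's strict convexity and $C^{1,\alpha}$ theory for the Monge--Amp\`ere equation (as in \cite{Caf91,TWma}): a normalized convex solution of $\lambda\le\det D^2\tilde u_h\le\Lambda$ is strictly convex and $C^{1,\alpha}$ in the interior, with the section $\{\tilde u_h<t\}$ around its minimum comparable to an ellipsoid of eccentricity bounded by $t^{-\sigma}$ for a universal $\sigma<1$; equivalently there are universal $\alpha,\beta\in(0,1)$ with $c^{-1}|x|^{1+\beta}\le \tilde u_h(x)\le c|x|^{1+\alpha}$ near the center of $\tilde S_h$. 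Undoing the normalization, the upper bound $\tilde u_h\le c|x|^{1+\alpha}$ gives, via the nesting of sections $S_t\subset S_h$ for $t<h$ and a standard iteration over dyadic heights $h=2^{-k}$ (the doubling of sections is what makes $T_h$ behave controllably as $h\to0$), the bound $u(x)\le C|x|^{1+\alpha}$ near the origin, and symmetrically the lower bound $u(x)\ge C^{-1}|x|^{1+\beta}$; this yields \eqref{e501}. The main obstacle is the uniformity in $h$: one must verify that the affine normalizers $T_h$ do not degenerate too fast — i.e. that the sections $S_h$ satisfy a doubling/engulfing property — so that the interior bounds from Lemmas \ref{l202}, \ref{l203} and the Caffarelli theory combine into a scale-invariant estimate rather than one that deteriorates as $h\to0$; this is exactly the point where the boundedness of $\tilde f_h$ (hence $n=2$ and $0\le\alpha\le\tfrac14$) and the a priori estimates are used in tandem.
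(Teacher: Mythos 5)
Your approach is a genuinely different route from the paper's. The paper's proof of Lemma~\ref{l501} is essentially elementary: it first establishes that $u$ is $C^1$ smooth at the origin (by a rotation of axes and a contradiction argument exactly as in Lemma~\ref{l402}, ruling out a ``corner'' on the graph), and then deduces the section-containment $\dist(S^0_{h/2},\partial S^0_h)\geq C_1$, equivalently $u(\theta x)\geq\tfrac12 u(x)$ for $x\in\partial S^0_h$. From this single inequality both sides of \eqref{e501} follow by a direct dyadic iteration borrowed from \cite{TWma}; the a priori determinant estimates and the Caffarelli--Guti\'errez machinery never appear in this lemma's proof. You instead propose to normalize the sections $S_h$, obtain two-sided determinant bounds from Lemmas~\ref{l202} and~\ref{l203}, and then invoke Caffarelli's $C^{1,\alpha}$ and engulfing theory, undoing the normalization at the end. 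What your route buys is a more ``standard'' Monge--Amp\`ere feel and explicit reuse of the machinery already set up in \S2; what the paper's route buys is economy and avoidance of the uniformity-in-$h$ question you yourself flag as ``the main obstacle.''

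There are two concrete gaps in your proposal. First, you take for granted that after the rotation in $\mathbb{R}^{n+1}$ the function $u$ is a genuine $C^1$ graph near the origin, so that sections $S_h$ shrink nicely; but a priori $u$ could have a Lipschitz corner there. In that case $T_h$ degenerates in a direction-dependent way, and it is not even clear that Lemma~\ref{l202} applies as stated (one would need the rotated a~priori estimate of Lemma~\ref{l204}, not \ref{l202}, and the determinant would actually be unbounded by Lemma~\ref{l401}). The paper resolves this \emph{first}, by showing that a corner is ruled out by the Lemma~\ref{l402}-type argument, before any normalization or iteration begins; your outline skips this step entirely. Second, your application of Lemma~\ref{l203} to the Legendre transform $\tilde u_h^*$ and the claim that the resulting two-sided determinant pinch is uniform in $h$ is precisely the engulfing/doubling property, which you state needs verification but do not supply. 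Without it, the affine maps $T_h$ could degenerate as $h\to 0$ and the renormalized estimate would not assemble into a single inequality of the form \eqref{e501}. So while the skeleton of your argument is standard and could be made to work, as written it assumes the two most delicate points rather than proving them; the paper's convexity-iteration proof sidesteps both.
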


\begin{proof}
Denote $S_h^0=\{x\in\Omega\,:\,u(x)<h\}$. By the strict convexity, $S_h^0\Subset\Omega$ when $h>0$ is small. We point out that the proof of strict convexity in Section 4 implies that $u$ is $C^1$ smooth. In fact, if $u$ is not $C^1$ at some point, by a rotation of axes we assume $\mathcal{G}_u\subset\{x_3\geq a|x_1|\}$ for some constant $a>0$. Let $L$ be the intersection of $\mathcal{G}_u$ with $\{x_3=0\}$. $L$ could be a single point or a segment on $x_2$-axis. From the proof of Lemma \ref{l402}, by a contradiction argument, we can rule out the possibility of both cases, which implies that $\mathcal{G}_u$ is $C^1$ smooth. 
Hence we have 
	\begin{equation}\label{e502}
		\dist\left(S_{h/2}^0, \partial S_h^0\right)\geq C_1,
	\end{equation}
or equivalently,
	\begin{equation}\label{e503}
		u(\theta x)\geq\frac12u(x)
	\end{equation}
for any $x\in\partial S_h^0$, where $\theta=1-\frac12C_1$. As $h$ is any small constant, it follows that for any $x$ near the origin,
	\begin{equation}\label{e504}
		u(x)\geq 2^{-k}u(\theta^{-k}x)
	\end{equation}
provided $\theta^{-k}x\in\Omega$. Hence we obtain the first inequality in \eqref{e501} with $\beta$ given by $\theta^{1+\beta}=1/2$.

To prove the second inequality, we claim that there exists a constant $\sigma>0$ such that for any small $h>0$ and any $x\in\partial S_h^0$,
	\begin{equation}\label{e505}
		u(\frac12x)<\frac{1-\sigma}{2}u(x).
	\end{equation}
Define $\alpha$ by $1-\sigma=2^{-\alpha}$. Then for any $x\in\partial\Omega$ and any $t\in(\frac{1}{2^{k+1}},\frac{1}{2^k})$,
	\begin{equation}\label{e506}
	\begin{split}
		u(tx) &\leq 2^{-k}(1-\sigma)^ku(x) \\
			&= (2^{-k})^{1+\alpha}u(x) \\
			&\leq 2t^{1+\alpha}u(x).
	\end{split}
	\end{equation}
Hence $u\in C^{1,\alpha}$.

Inequality \eqref{e505} follows from \eqref{e503} as proved in \cite{TWma}. For the reader's convenience, we include it here. 
Consider the convex function $g(t)=u(tx)$, $t\in[-1,1]$. Replacing $g$ by $g/g(1)$, we may assume that $g(1)=1$. Let $\psi(t)=g(t+\frac12)-g'(\frac12)t-g(\frac12)$. Then $\psi(0)=0, \psi\geq0$. If $g(\frac12)>\frac{1-\varepsilon}{2}$, by convexity we have $1+\varepsilon\geq g'(\frac12)\geq 1-\varepsilon$ and $\psi(-\frac12)\leq\varepsilon$. Applying \eqref{e503} to $\psi$, we have $\psi(-\frac12\theta^{-1})\leq 2\psi(-\frac12)\leq 2\varepsilon$. Hence $g(-\frac12\theta^{-1}+\frac12)<0$ when $\varepsilon<\frac{1-\theta}{5}$, we reach a contradiction as $u\geq0$.
\end{proof}

We remark that the estimate \eqref{e501} was also obtained in \cite{LTW} for strictly $c$-convex solutions of general Monge-Amp\`ere equations arising in the optimal transportation by a duality argument.

\subsection{$C^{1,1}$ regularity}

Assume that $\psi$ is uniformly convex. Denote $T=\{x\in\Omega\,:\,u(x)=\psi(x)\}$ and $F=\Omega-T$. Let $\mathcal{G}_{T},\mathcal{G}_{F}$ be the graph of $u$ over $T,F$, respectively. 
For any point $p\in\partial\mathcal{G}_F$, we may choose a proper coordinate system such that $p$ is the origin; and by a rotation in $\mathbb{R}^{n+1}$, we may also assume that $\{x_{3}=0\}$ is a tangent plane of $\mathcal{G}_\psi$. Therefore, $\psi(0)=0,D\psi(0)=0$, $u\geq\psi$ and $\psi$ is uniformly convex. 

\begin{lemma}\label{l502}
Assume that $\psi$ is uniformly convex. There exist two positive constants $C_1,C_2>0$ such that
	\begin{equation}\label{e507}
		C_1|x|^2\leq u(x)\leq C_2|x|^2.
	\end{equation}
\end{lemma}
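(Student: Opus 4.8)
The plan is to establish the two-sided quadratic bound in Lemma \ref{l502} separately: the lower bound $u(x)\ge C_1|x|^2$ will come from the obstacle constraint, and the upper bound $u(x)\le C_2|x|^2$ will come from a barrier/sub-level-set argument combined with the interior estimates and the $C^{1,\alpha}$ regularity already proved in Lemma \ref{l501}. Since $p$ is the origin, $\psi(0)=0$, $D\psi(0)=0$, $u\ge\psi$, and $\psi$ is uniformly convex, the lower bound is immediate: uniform convexity of $\psi$ gives $\psi(x)\ge c|x|^2$ for a constant $c>0$ depending on the modulus of convexity of $\psi$, and hence $u(x)\ge\psi(x)\ge c|x|^2$ near the origin; take $C_1=c$. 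This is the easy half.

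For the upper bound I would argue as follows. Because $p\in\partial\mathcal G_F$, in every neighborhood of the origin there are points where $u>\psi$, i.e. points of the non-contact set $F$. On $F$ the maximizer satisfies equation \eqref{e205} (via the approximation Lemma \ref{l301} and Lemma \ref{l302}, with the inhomogeneous term bounded in $L^\infty$ by the remark following Lemma \ref{l202}), so the a priori determinant estimates apply: by Lemma \ref{l202} we have $\det D^2 u\le C$ on compact subsets of $F$, and by Lemma \ref{l203} together with \eqref{e208} we have $\det D^2 u\ge c_0>0$ there. The key geometric point is that since $\{x_3=0\}$ is tangent to $\mathcal G_\psi$ at the origin and $u\ge\psi$ with $\psi$ uniformly convex, the sub-level set $S_h^0=\{x:u(x)<h\}$ satisfies $S_h^0\subset\{x:\psi(x)<h\}\subset B_{C\sqrt h}(0)$, so $|S_h^0|\le C h^{n/2}$; moreover $0\in S_h^0$ and, by the strict convexity of $\mathcal G_u$ (Section 4) and the $C^1$ regularity noted in Lemma \ref{l501}, $S_h^0\Subset\Omega$ for small $h$. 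Then I would apply the engulfing/volume machinery: the upper bound on $\det D^2 u$ forces $|S_h^0|\ge c\,h^{n/2}$ (by comparison with the John ellipsoid of $S_h^0$ and the Aleksandrov estimate, using that $D u(S_h^0)$ has measure at most $C\,\mathrm{diam}(S_h^0)^{?}$ — more precisely, combining $\det D^2u\le C$ with $0\in S_h^0$ and the normalization of $S_h^0$), which together with $|S_h^0|\le Ch^{n/2}$ pins the shape of $S_h^0$ to be comparable to a ball of radius $\sqrt h$. Concretely one shows $B_{c\sqrt h}(0)\subset S_h^0$, which is exactly the statement $u(x)\le C_2|x|^2$ with $C_2=c^{-2}$.

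The step I expect to be the main obstacle is controlling the geometry of $S_h^0$ near the origin, since the origin is a boundary point of the contact set and $u$ need not solve the fourth-order equation at $0$ itself. One must interpolate between the contact region, where $u=\psi$ is genuinely uniformly convex, and the non-contact region, where only the interior estimates of Section 2 are available; the danger is that $S_h^0$ could be long and thin along $\partial T$. I would handle this by a covering argument: decompose $S_h^0$ into its intersection with $T$ (where $u=\psi\le Ch\Rightarrow |x|\le C\sqrt h$ directly) and with $F$ (where Lemmas \ref{l202}, \ref{l203} give two-sided bounds on $\det D^2 u$, hence, by Caffarelli's strict convexity and $C^{1,\alpha}$ theory applied on sections contained in $F$, a round-ness estimate on those pieces of $S_h^0$), and then glue using the convexity of $u$ and the fact that the two regions share the boundary $\partial T$ where $\det D^2 u$ remains bounded below. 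This gluing, together with the elementary lower bound from $\psi$, yields \eqref{e507}.
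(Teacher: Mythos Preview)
Your lower bound is exactly the paper's: $u\ge\psi\ge C_1|x|^2$ from uniform convexity of $\psi$ and the choice of tangent plane.

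For the upper bound, however, there is a genuine gap, and your proposed fix (splitting $S_h^0$ into $T$ and $F$ and gluing) is not the right route. Two points:

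\textbf{First}, you are restricting the determinant estimate to $F$ unnecessarily. The approximations $u_i$ from Lemma \ref{l301} satisfy the fourth-order equation $L[u_i]=f+g_i$ on \emph{all} of $\Omega$, not just on $F$ (see \eqref{e307}), with $g_i\le 0$. After the rotation in $\mathbb{R}^{n+1}$ you must use Lemma \ref{l204} rather than Lemma \ref{l202}, but the point is the same: the constant depends only on $\sup\hat F_t$, so the penalty term does not spoil the bound. Passing to the limit via weak continuity of the Monge--Amp\`ere measure gives $\mu[u]\le C\,dx$ on the whole section $S_h^0$. There is no need to treat $T$ and $F$ separately, and the ``gluing along $\partial T$'' worry disappears.

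\textbf{Second}, even granting $\det D^2u\le C$ on $S_h^0$, your step ``$|S_h^0|\ge c\,h^{n/2}$ together with $|S_h^0|\le Ch^{n/2}$ pins the shape of $S_h^0$ to a ball'' is not justified as written; comparable volume does not give roundness. The paper bypasses this entirely with a short contradiction argument via a \emph{cone comparison on the normal mapping}. Suppose $u(x_k)\ge 2^k|x_k|^2$ along a sequence $x_k\to 0$, set $\varepsilon_k=u(x_k)$ and $E_\varepsilon=\{u<\varepsilon\}$. After the rescaling $u\mapsto\varepsilon_k^{-1}u(\varepsilon_k^{1/2}\,\cdot)$, let $v$ be the convex cone with vertex at the origin and $v=1$ on $\partial E_1$. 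Then $N_v(E_1)\subset N_u(E_1)$, and $N_v(E_1)$ is a \emph{convex} set (since $v$ is homogeneous of degree one) which contains both the ball $B_{C_1^{1/2}}(0)$ (from the lower bound $u\ge C_1|x|^2$) and a point $\hat p$ with $|\hat p|\ge 2^{k/2}$ (from $|x_k|\le 2^{-k/2}$). Hence $|N_u(E_{\varepsilon_k})|\ge c\,2^{k/2}\varepsilon_k^{n/2}$. On the other hand, the determinant bound and $E_{\varepsilon_k}\subset B_{C\sqrt{\varepsilon_k}}$ give $|N_u(E_{\varepsilon_k})|\le C|E_{\varepsilon_k}|\le C\varepsilon_k^{n/2}$, a contradiction for large $k$. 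Note that only the \emph{upper} determinant bound is used; your appeal to Lemma \ref{l203} is unnecessary here.
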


\begin{proof}
The first inequality follows from the uniform convexity of $\psi$. That is
	\[u(x)\geq\psi(x)\geq C_1|x|^2\]
as $\{x_{3}=0\}$ is the tangent plane of $\mathcal{G}_\psi$ at the origin. 

For the second inequality, suppose by contradiction that it is not true, then there is a sequence of points $x_k$ with $|x_k|\to0$ such that $u(x_k)\geq 2^k|x_k|^2$. We claim that
	\begin{equation}\label{e508}
		|N_u(E_{\varepsilon_k})|\geq C2^{k/2}\varepsilon_k^{n/2}
	\end{equation}
where $\varepsilon_k=u(x_k), E_\varepsilon=\{x\in\Omega\,:\,u(x)<\varepsilon\}$.
To prove \eqref{e508}, by a rescaling
	\[u\to\varepsilon_k^{-1}u,\quad\mbox{and }x\to\varepsilon_k^{-1/2}x,\]
we may assume $\varepsilon=1$. Let $v$ be a convex function defined on the entire $\mathbb{R}^2$ such that $v(0)=0, v=u=1$ on $\partial E_1=\partial\{u<1\}$, and $v$ is homogeneous of degree $1$. Then the graph of $v$ is a convex cone with vertex at the origin. By the convexity of $u$ we have 
	\[N_v(E_1)\subset N_u(E_1).\]
By the first inequality \eqref{e507}, we have
	\[N_v(E_1)\supset B_{C_1^{1/2}}(0),\]
the ball of radius $C_1^{1/2}$. 
By the assumption that $1=v(x_k)=u(x_k)>2^k|x_k|^2$, the slope of $v$ at $x_k$ is greater than $2^{k/2}$. Hence there exists a point $\hat p\in N_v(E_1)$ such that $|\hat p|\geq 2^{k/2}$.
Finally noting that $N_v(E_1)=N_v(\mathbb{R}^2)$ is a convex set as $v$ is a convex cone, we obtain
	\[|N_v(E_1)|\geq CC_1^{(n-1)/2}|\hat p|\geq C2^{k/2}.\]
By rescaling back, we then obtain $|N_u(E_{\varepsilon_k})|\geq C2^{k/2}\varepsilon_k^{n/2}$.
	
On the other hand, by the first inequality in \eqref{e507} we have $|E_\varepsilon|\leq C\varepsilon^{n/2}$.
Hence by the determinant estimate in \S2.5 we have
	\[|N_u(E_{\varepsilon_k})|=\int_{E_{\varepsilon_k}}\det\,D^2u\leq C\varepsilon_k^{n/2}.\]
When $k$ is sufficiently large, we reach a contradiction. 
\end{proof}

\begin{corollary}\label{c501}
There is no line segment on $\mathcal{G}_F$ with an endpoint on $\partial\mathcal{G}_F$. 
\end{corollary}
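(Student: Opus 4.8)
The plan is to derive the corollary directly from the two-sided quadratic growth estimate of Lemma \ref{l502}, arguing by contradiction. Suppose $\mathcal{G}_F$ contained a line segment $\ell$ with one endpoint $p\in\partial\mathcal{G}_F$ lying over an interior point of $\Omega$. Since $p$ is a boundary point of the graph over the non-contact set $F=\Omega-T$, and $T=\{u=\psi\}$ is closed, $p$ must be a contact point, i.e. $u(p)=\psi(p)$. Hence the normalization used in Lemma \ref{l502} is available at $p$: choosing coordinates so that $p$ is the origin and $\{x_3=0\}$ is a tangent plane of $\mathcal{G}_\psi$ there, we have $\psi(0)=0$, $D\psi(0)=0$, $u\geq\psi$, and $\psi$ uniformly convex, so that $u(x)\geq\psi(x)\geq C_1|x|^2$ and, by Lemma \ref{l502}, $u(x)\leq C_2|x|^2$ near the origin.

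Next I would use that $\ell$, being a segment contained in the graph of the convex function $u$, forces $u$ to be affine along the projection of $\ell$. Writing that projection as $\{te:0\leq t\leq\delta\}$ for a unit vector $e\in\mathbb{R}^2$, and using $u(0)=0$, we get $u(te)=bt$ for $t\in[0,\delta]$. Since $u\geq\psi\geq0$ near the origin, $b\geq0$, while the upper bound $u(te)\leq C_2t^2$ gives $bt\leq C_2t^2$ for all small $t>0$, hence $b=0$. Therefore $u\equiv0$ along $\{te:0\leq t\leq\delta\}$.

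This, however, contradicts the lower bound from the uniform convexity of $\psi$: for $t\in(0,\delta]$ we would have $0=u(te)\geq\psi(te)\geq C_1t^2>0$. So no such segment exists, proving the corollary. I do not expect a genuine obstacle in this argument; the only points requiring care are the verification that an endpoint of $\mathcal{G}_F$ interior to $\Omega$ is a contact point — so that both inequalities of Lemma \ref{l502} apply at $p$ after the stated normalization — and the elementary fact that a segment lying in the graph of a convex function projects to an affine piece of that function. Both are routine.
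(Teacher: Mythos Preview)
Your argument is correct and is precisely the intended one: the paper states Corollary~\ref{c501} without proof as an immediate consequence of the two-sided quadratic growth in Lemma~\ref{l502}, and your contradiction via the affine behavior of $u$ along the projected segment is the standard way to extract it. The only point worth making explicit is that in the paper $\partial\mathcal{G}_F$ is indeed the free boundary (the graph over $\partial T\cap\Omega$), so every such endpoint is a contact point and the normalization preceding Lemma~\ref{l502} applies; you identified this correctly.
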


Now we prove the second part of Theorem \ref{t101}. 

\begin{theorem}\label{t501}
Suppose that $\psi$ is uniformly convex. Then $u$ is $C^{1,1}$ smooth in a neighborhood of $\partial F$. 
\end{theorem}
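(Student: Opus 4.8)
The plan is to prove the quadratic growth bound
\[
0\le u(x)-\ell_p(x)\le C|x-p|^2
\]
for all $p$ in a fixed neighborhood $N$ of the free boundary $\partial F=\partial T$ and all $x\in N$, where $\ell_p$ denotes a supporting plane of $u$ at $p$ (which, when $p$ lies in the contact set, is the tangent plane of $\mathcal{G}_\psi$). For a convex $C^1$ function --- and $u\in C^1$ was established in the previous subsection --- this bound is equivalent to $u\in C^{1,1}(N)$: every second directional derivative of $u$ is then in $L^\infty(N)$, so $Du$ is Lipschitz. Since $\psi<\varphi$ on $\partial\Omega$, $\partial T$ is a compact subset of the interior of $\Omega$, so it suffices to prove the bound in a neighborhood of each free-boundary point.

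First I would dispose of the two easy regimes. If $p$ lies in the interior of the contact set $T$, then $u=\psi$ on a ball around $p$, so $u-\ell_p=\psi-\ell_p\le C|x-p|^2$ by the smoothness and uniform convexity of $\psi$ for $x$ close to $p$; for $x$ farther from $p$ one uses case (i) below. If $p\in F$, set $d=\dist(p,\partial F)$, pick a nearest free-boundary point $q$, and normalize at $q$ as in Lemma~\ref{l502} (so $q=0$, $\psi(0)=0$, $D\psi(0)=0$, $\ell_q=0$); Lemma~\ref{l502} gives $C_1|x|^2\le u(x)\le C_2|x|^2$ near $0$. From the upper bound and convexity one obtains $|Du(p)|\le Cd$, and then: (i) for $|x-p|\ge d/2$, since $|p-q|=d\le 2|x-p|$, one gets $u(x)\le C_2|x-q|^2\le C|x-p|^2$ while $\ell_p(x)\ge -|Du(p)|\,|x-p|\ge -C|x-p|^2$, which proves the bound; (ii) for $|x-p|<d/2$, the ball $B_d(p)$ lies in $F$, on which $u$ solves \eqref{e205} with $f\in L^\infty$ and $\det\,D^2u$ pinched between two positive constants by Lemmas~\ref{l202} and~\ref{l203}, so the bound reduces to a \emph{uniform} interior $C^{1,1}$ estimate on $B_{d/2}(p)$.

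That uniform interior estimate is the heart of the matter, and I would obtain it by rescaling. Put $y=(x-p)/d$ and $\tilde u(y)=d^{-2}\bigl(u(p+dy)-\ell_p(p+dy)\bigr)$, so that $D^2\tilde u(y)=D^2u(p+dy)$. Then $\tilde u$ is a smooth, locally uniformly convex solution on $B_1$ of an equation of the form \eqref{e205} with right-hand side $d^2f\in L^\infty$, with $\det\,D^2\tilde u$ pinched between the same two constants, with $\tilde u\ge 0$, $\tilde u(0)=0$, $D\tilde u(0)=0$, and $0\le\tilde u\le C$ on $B_1$ (this last bound coming from Lemma~\ref{l502} via the computations of case (i)); Theorem~\ref{t201}(i) then yields $\|\tilde u\|_{W^{4,p}(B_{1/2})}\le C$, hence $|D^2\tilde u(0)|=|D^2u(p)|\le C$, which is the claim. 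The hard part will be that the constant in Theorem~\ref{t201}(i) depends on the modulus of convexity of $\tilde u$, and that the sections of $u$ at scale $d$ may, a priori, become long and thin as $p\to\partial F$ --- the bound $\det\,D^2\tilde u\ge c_0$ alone does not prevent this. Since \eqref{e205} is affine invariant only when $\alpha=\frac1{n+2}$, for $\alpha<\frac1{n+2}$ one must, exactly as in the strict-convexity argument of Section~4, pass to the graph of $\tilde u$ and apply an $\mathbb{R}^{n+1}$-rotation, reducing to an equation of the form \eqref{e218} for which the determinant estimate of Lemma~\ref{l204} is uniform, and then use that the eccentricity of these sections is itself controlled --- uniformly in $p$ --- by the two-sided quadratic pinching of Lemma~\ref{l502} at free-boundary points (the lower pinch being supplied by the uniform convexity of $\psi$), which is the quantitative form of the fact, recorded in Corollary~\ref{c501}, that $\mathcal{G}_u$ develops no flat direction at points of $F$ adjacent to the obstacle.
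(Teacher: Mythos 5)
Your proposal takes essentially the same route as the paper: rescale at scale $\delta=\dist(p,\partial F)$, use the two-sided quadratic pinching of Lemma~\ref{l502} at free-boundary points to control the rescaled function, and invoke the interior machinery of Sections 2--4 (the a priori determinant bounds under the $\mathbb{R}^{n+1}$-rotation, the approximation, and strict convexity) to obtain a scale-invariant Hessian bound. The paper centers the rescaling at the nearest free-boundary point rather than at $p$ itself and is terser about the uniformity of the modulus of convexity, but the substance of the argument is identical.
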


\begin{proof}
When $\alpha=\frac{1}{n+2}$, the $C^{1,1}$ regularity was obtained in \cite{STW} for enclosed convex hypersurfaces with maximal affine area, where the affine invariant property plays a crucial role. But for general $0\leq\alpha\leq\frac{1}{n+2}$, we need to rotate the graph $\mathcal{G}$ in $\mathbb{R}^{n+1}$ and use the a priori determinant estimates in Section 2. Note that the dimension two is needed in the proof of strict convexity, see Lemmas \ref{l402} and \ref{l404}.

Let $p=(p_1,p_2,p_{3})$ be a point on $\mathcal{G}_F$, close to $\partial\mathcal{G}_F$.
Let $\delta=\dist(p,\partial\mathcal{G}_F)$ (Euclidean distance). Choosing a proper coordinate system we suppose the origin is a point on $\partial\mathcal{G}_F$ and $|p|=\delta$.
By a rotation transform, suppose furthermore that $\mathcal{G}_\psi\subset\{x_{3}\geq 0\}$, and near the origin $u$ satisfies \eqref{e507}. 

Let $u_\delta(x)=\delta^{-2}u(\delta x)$ and let $p_\delta=\left(\frac{p_1}{\delta},\frac{p_2}{\delta},\frac{p_{3}}{\delta^2}\right)$. Then by \eqref{e507},
	\begin{equation}\label{e509}
		C_1|x|^2\leq u_\delta(x)\leq C_2|x|^2.
	\end{equation}
From Section 4, $u_\delta$ is strictly convex near $p_\delta$. 
By the a priori estimates in Section 2 and the approximation in Section 3, we then infer that there exist constants $C_1,C_2>0$ such that
	\[C_1I\leq D^2u_\delta(\bar p)\leq C_2I\]
for any $\bar p$ near $p_\delta$, where $I$ is the unit matrix. The constants $C_1$ and $C_2$ are independent of $\delta$. 
By our rescaling, $D^2u(p)=D^2u_\delta(p_\delta)$. Hence the second derivatives of $u$ are uniformly bounded near $\partial F$. This complete the proof.
\end{proof}


\end{document}